\documentclass[12pt,reqno]{amsart}
\usepackage{amssymb}
\input amssym.def
\usepackage{amsmath,amsfonts,hyperref,xcolor,textcomp}
\usepackage{amscd,orcidlink}
\usepackage[mathscr]{eucal}
\usepackage{graphicx}
\usepackage{tikz}
\usepackage{float}
\hypersetup{colorlinks=true,citecolor={purple},linkcolor={blue},urlcolor={violet}}

\makeatletter
\@namedef{subjclassname@2020}{%
  \textup{2020} Mathematics Subject Classification}
\makeatother

\newcommand{\F}{{\mathbb F}}
\newcommand{\Z}{{\mathbb Z}}

\newcommand{\C}{{\mathbb C}}
\newcommand{\uH}{{\mathbb H}}

\newtheorem{thm}{Theorem}
\newtheorem{lem}{Lemma}
\newtheorem{cor}{Corollary}

\newtheorem{rmk}{Remark}

\newcommand{\thmref}[1]{Theorem~\ref{#1}}

\newcommand{\lemref}[1]{Lemma~\ref{#1}}
\newcommand{\corref}[1]{Corollary~\ref{#1}}
\newcommand{\rmkref}[1]{Remark~\ref{#1}}

\begin{document}

\title[On the monomials of Poincar\'e Series of negative index]{On the monomials of Poincar\'e series of negative index}

\author{Divyanshu Kala \orcidlink{0009-0007-9463-9997}}

\address{Divyanshu Kala\\ \newline
Institute of Mathematical Sciences, IV Cross Street, Taramani, Chennai 600113, India.}
\email{divyanshuk@imsc.res.in}

\subjclass[2020]{11F11, 11F03}

\keywords{Zeros of Poincar\'e series, Monomial relations of negative index Poincar\'e series}

\date{\today}

\begin{abstract}
It is well known that an Eisenstein series $E_k$ cannot be written as a product of two lower-weight Eisenstein series, except 
for $E_{14}=E_4 E_{10}= E_4^2 E_6= E_6 E_8$. A similar result is also known for the Hecke eigenforms. It is also known that equalities among the monomials of the Eisenstein series can be reduced to the above-mentioned identities. Recently, the present author, along with E. Saha studied the possible monomial relations of the Poincar\'e cusp forms of index $1$. So far, the problem of studying the monomial relations has been restricted to the setup of holomorphic modular forms. For an even integer $k\ge 4$ and a negative integer $m$, the Poincar\'e series $G_{k}(z,m)$ of weight $k$ and index $m$ is a weakly holomorphic modular form having a pole of order $-m$ at $\iota\infty$. It is immediate that the Poincaré series $G_k(z,m)$ for $m<0$ can
not be written as a product of two lower-weight Poincar\'e series of the same index $m$. In this article, we investigate the possible equalities among the monomials of the Poincaré series 
 $G_k(z,m)$ for arbitrary $m<0$. In particular, we show that for any $m<0$ such that $0.06\le\{-2m\pi\}\le0.99$, two 
 monomials composed of $G_k(z,m)$ of the weights $k\ge 50$ are never equal, where $\{x\}$ denotes the fractional part of a real number $x$. In view of Weyl's equidistribution criterion, at least 
 $93\%$ of $m<0$ satisfies the above inequality. 
 \end{abstract}

\maketitle

\section{Introduction}

Studying the possible relations among the monomials of modular forms is a problem of classical interest, and it has also attracted considerable attention in recent times. Let $\uH:=\{z\in\C: \Im(z)>0\}$. For $z\in \uH$, the (normalised) Eisenstein series for ${\rm SL}(2,\Z)$ of weight $k\ge 4$ is defined by the absolutely convergent series
$$
E_k(z):=\frac{1}{2}\sum_{c,d\in\Z \atop \gcd(c,d)=1} \frac{1}{(cz+d)^k}.
$$
The classical identity among the divisor functions
$$
\sigma_7(n)=\sigma_3(n)+120\sum_{r=1}^{n-1}\sigma_3(r)\sigma_3(n-r),
$$
where $\sigma_{k-1}(n)=\sum_{d\mid n}d^{k-1}$, is equivalent to $E_8=E_4^2$, which is a consequence of the 
fact that the $\C$-vector space of modular forms of weight $8$ is $1$-dimensional. In fact each of the $\C$-vector spaces of modular forms of weight $4,6,8,10$ and $14$ is also $1$-dimensional, which gives rise to the equalities $E_{14}=E_4 E_{10}= E_4^2 E_6= E_6 E_8$. This naturally leads to the question of whether an Eisenstein series can be written as a product of some lower-weight Eisenstein series. Using the Rankin-Selberg method, W. Duke \cite{WD} and E. Ghate \cite{EG} independently proved that an Eisenstein series is never a product of two lower-weight Eisenstein series, except for the above-mentioned identities. They also proved that a Hecke eigenform is not a product of two lower-weight Hecke eigenforms, apart 
from $16$ exceptions. Later, E. Ghate \cite{EG2} extended this result to the Hecke eigenforms of square-free level, and M. L. Johnson \cite{J} extended it further 
to arbitrary level. For level-one eigenforms, one can note that the product of at least two Hecke eigen cusp forms can never be a Hecke eigenform, as the order of 
zero of a Hecke eigenform at infinity is at most $1$. In this direction, B. A. Emmons and D. Lanphier \cite{EL} proved that products of an arbitrary number of level one Hecke eigenforms can never be a Hecke eigenform.  D. Lanphier and R. Takloo-Bighash \cite{LT} studied this problem for the Rankin-Cohen brackets of eigenforms. Recently, the finiteness of the Hecke eigenform identities has also been studied for Hilbert modular forms. K. Joshi and Y. Zhang \cite{JZ} proved the finiteness of Hilbert eigenform identities over real quadratic fields. Y. You and Y. Zhang \cite{YZ} extended this result over the totally real fields.

A more general question in this direction is to study the relations among the monomials of Eisenstein series. Using a completely different approach via the location and separation properties of the zeros of the Eisenstein series in the standard fundamental domain, T. Griffin, N. Kenshur, A. Price, B. Vandenberg-Daves, H. Xue and D. Zhu \cite{GKP} proved the following on the monomial relations of the Eisenstein series.
For even integers $k_1,\ldots,k_{n_1},l_1,\ldots,l_{n_2}\ge4$, the equation
 $$
 \prod_{i=1}^{n_1} E_{k_i}=\prod_{j=1}^{n_2} E_{l_j},
 $$
 has no solutions other than those arising from the dimension restrictions.
They showed that if $k=\max\{k_i,l_j: 1\le i\le n_1, 1\le j \le n_2\}$ is more than $14$, then there is a zero of $E_k$ which is not a zero of $E_t$ for any $t<k$, and hence such a monomial relation cannot exist. The location of the zeros of $E_k$ in the standard fundamental domain
$$
\F:= \left\{ |z| \ge 1, -\frac{1}{2} \le \Re(z) \le 0 \right\}
\cup \left\{ |z| > 1, 0 < \Re(z) < \frac{1}{2} \right\}
$$ 
is known to be on the arc $A:=\{e^{\iota\theta}:\theta\in[\pi/2,2\pi/3]\}$ due to F. K. C. Rankin and H. P. F. Swinnerton-Dyer \cite{RSD}.

Poincar\'e series can be viewed as a generalisation of the Eisenstein series. For an even integer $k\ge 4$, $m\in \Z$ and $z\in \uH$, it is defined by
$$
G_k(z,m):=\frac{1}{2}\sum_{c,d\in\Z \atop \gcd(c,d)=1} \frac{e^{2\pi \iota m\gamma(z)}}{(cz+d)^k},
$$
where $\gamma\in{\rm SL}(2,\Z)$ with the second-row as $(c\ \ d)$. For $m=0$, it is $E_k$ and for $m\ge1$, it is a cusp 
form. For $m<0$, it is a weakly holomorphic modular form having a pole of order $-m$ at $\iota\infty$. 
For the Poincaré series, only partial information about the location of its zeros is known. For $m\le 1$, R. A. Rankin 
\cite{RR} proved that all the zeros of $G_k(z,m)$ in $\F$ are on the arc $A$. Using the location and separation
 properties of the zeros of $G_{k}(z,1)$ on the arc $A$, the author of this article and E. Saha \cite{KS2} proved that for $1\le i\le n_1, 1\le j\le n_2$ and $k_i\ne l_j \ge 36$,
 $$
\prod_{i=1}^{n_1}G_{k_i}(z,1)\neq\prod_{j=1}^{n_2}G_{l_j}(z,1).
 $$
 
 So far, the problem of studying the monomial relations has been limited to the 
 setup of holomorphic modular forms. Unlike the space of modular forms 
 being finite-dimensional, the space of weakly holomorphic modular forms is 
 an infinite-dimensional $\C$-vector space. Poincar\'e series of negative index are important objects in the space of weakly holomorphic modular forms and they form a basis of this vector space. Also, for $m<0$, the location of the zeros of $G_k(z,m)$ in $\F$ is known to be on the arc $A$. 
 Therefore, a natural curiosity here is to investigate the possible monomial relations among $G_k(z,m)$ for $m<0$.
However, opposed to the previous cases $m=0$ and 
$m=1$, here we have to deal with arbitrary $m<0$, leading us to certain complications and restrictions. For a real number $x$, let $\{x\}$ denote the fractional part of $x$. In this article, we prove the following theorem.
\begin{thm}\label{main}
Let $s,t,m\in\Z$ be such that $s,t\ge1$, $m<0$ and $0.06\le\{-2m\pi\}\le0.99$. 
For any even integers $k_i,l_j\ge 50$ such that $k_i\neq l_j$ for every $1\le i\le s, 1\le j\le t$,
we have,
$$
\prod_{i=1}^sG_{k_i}(z,m)\neq\prod_{j=1}^tG_{l_j}(z,m).
$$
\end{thm}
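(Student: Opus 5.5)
We follow the zero-separation strategy of \cite{GKP} and \cite{KS2}. Assume the monomial identity holds, and let $k$ be the largest weight among all $k_i,l_j$. Since $k_i\neq l_j$, after relabelling $k$ occurs on only one side, say as some $k_i$. Both sides have the same total weight. The plan is to produce a zero $z_0$ of $G_k(z,m)$ on the arc $A$ that is not a zero of $G_l(z,m)$ for any even $l$ with $50\le l<k$. At such a point the left side vanishes. The right side cannot vanish there: every factor $G_{l_j}(z,m)$ has $l_j\neq k$ and $l_j\le k$, hence $l_j<k$, so none of them vanishes at $z_0$. This contradicts the identity. (Zeros that lie off the arc are irrelevant, because Rankin's theorem \cite{RR} puts every zero in $\F$ on $A$.)

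\textbf{Step 1: zeros via an asymptotic.} Write $z=e^{\iota\theta}$ with $\theta\in[\pi/2,2\pi/3]$. Following Rankin's method, define
$$F_k(\theta,m)=e^{\iota k\theta/2}G_k(e^{\iota\theta},m).$$
This function is real-valued. The terms with $(c,d)=\pm(0,1)$ and $\pm(1,0)$ dominate. They give the main term
$$2\Re\bigl(e^{\iota k\theta/2}e^{2\pi\iota m e^{\iota\theta}}\bigr)=2e^{-2\pi m\sin\theta}\cos\Bigl(\frac{k\theta}{2}+2\pi m\cos\theta\Bigr).$$
Because $m<0$, the factor $e^{-2\pi m\sin\theta}$ is large. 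All remaining terms, namely those with $|c|,|d|\ge1$, we bound explicitly. For these $|cz+d|^{-k}$ is at most about $(2\sin(\theta/2))^{-k}$, multiplied by $e^{-2\pi m\Im\gamma z}$. The needed input is that $\Im\gamma(z)$ for such $\gamma$ is strictly smaller than $\sin\theta$, except near the endpoint $\theta=2\pi/3$, where care is required. We would aim for an explicit bound
$$\bigl|e^{2\pi m\sin\theta}F_k(\theta,m)-2\cos(\phi_k(\theta))\bigr|\le\epsilon_k<2,\qquad \phi_k(\theta)=\frac{k\theta}{2}+2\pi m\cos\theta,$$
valid for $k\ge50$ on a slightly shrunk arc. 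In each interval where $\phi_k$ crosses an odd multiple of $\pi/2$, the intermediate value theorem then gives exactly one zero. Counting these against the valence formula ($k/12-m$ zeros in total) locates every zero of $G_k(z,m)$, with the zero $\theta_n$ lying within $\delta_k=\arcsin(\epsilon_k/2)$-controlled distance of the solution of $\phi_k(\theta)=(2n+1)\pi/2$.

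\textbf{Step 2: separation.} Take the extreme zero of $G_k$ nearest $\theta=\pi/2$, or alternatively one near $2\pi/3$. At $\theta=\pi/2$ we have $\phi_l(\pi/2)=l\pi/4$, so the zeros near $\pi/2$ sit at explicit positions whose spacing in $\theta$ is about $2\pi/(l+\dots)$. The difference between the phases of the first zeros of $G_k$ and $G_l$ is governed by the $m$-dependent term $2\pi m\cos\theta$. Near the other endpoint $\theta=2\pi/3$ this term equals $-\pi m$, and combined with $k\theta/2$ it introduces the quantity $\{-2m\pi\}$ modulo the period. This is where the hypothesis $0.06\le\{-2m\pi\}\le0.99$ enters. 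It keeps the phase $\phi_k(2\pi/3)$, and the analogous phase for each smaller $l$, away from the boundary residue. Consequently the last zero of $G_k$ near $2\pi/3$ is isolated, and it lies more than $\delta_k+\delta_l$ away from every zero of $G_l$ for each $l<k$.

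\textbf{Main obstacle.} The hardest step is making the error bound of Step 1 uniform in $m$ and effective down to $k=50$, especially near $\theta=2\pi/3$. There the terms with $(c,d)=(1,1)$ and similar pairs have $|cz+d|=1$, so they are no longer small and must be absorbed into the main term. After absorbing them we would redo the separation with the modified main term, which has the form
$$\cos\phi_k(\theta)+\cos\bigl(\phi_k(\theta)-\text{shift}\bigr),$$
and check numerically that the fractional-part window $[0.06,0.99]$ leaves a margin exceeding the error for all $k\ge50$.
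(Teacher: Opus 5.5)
Your global frame is the paper's: take the top weight, exhibit one of its zeros on $A$ that no lower weight shares, and use Rankin's main term $2\cos\phi_k$. But Step 2 carries the whole content of the theorem, and it has a genuine gap: the mechanism you propose there is wrong.

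At $\theta=2\pi/3$ one has $2\pi m\cos(2\pi/3)=-m\pi\in\pi\Z$, so $\phi_k(2\pi/3)=k\pi/3-m\pi$ is a rational multiple of $\pi$. The quantity $\{-2m\pi\}$ does not enter there at all, so it cannot be what isolates the last zero. Worse, near $2\pi/3$ the $\pm(1,1)$ terms have $|z+1|=2\cos(\theta/2)\to 1$; your bound by $2\sin(\theta/2)$ is wrong for these. So the error is comparable to the main term, and zeros can be localised only to within $O(1/k)$, which is comparable to the spacing itself. The separations that must actually be certified are of order $1/k^2$: two weights in the same class mod $4$ differing by $4$ have first zeros about $4\pi/(k-4m\pi)^2$ apart. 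A numerical check after ``absorbing'' the $(1,1)$ term cannot cover infinitely many pairs $(k,m)$. The final sentence of your Step 2 is therefore an unsupported assertion.

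What is missing is the analysis at $\theta=\pi/2$, where $T_{k,m}$ is exponentially small and the first interior zero moves by only $O(e^{0.87m\pi}/k^2)$ (\lemref{disp}). There $-2m\pi$ enters through the slope $\phi_k'(\pi/2)=(k-4m\pi)/2$, acting as an effective weight shift. The first zero is where $\phi_k=k\pi/4+(r_k/2+1)\pi/2$, with no closed form, so everything depends on residues mod $4$. In the paper's notation ($l$ the top weight, $k<l$):

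If $r_k=r_l$ or $(r_k,r_l)=(2,0)$, the first zero of $G_l$ precedes that of $G_k$ by a quantified margin.

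If $(r_k,r_l)=(0,2)$, comparing $2\pi/(l-4m\pi)$ with $\pi/(k-4m\pi)$ shows the order of first zeros flips at $k\approx l/2+2m\pi$. The paper handles this with:
\begin{itemize}
\item second-order two-sided bounds on the first zero (\lemref{ulb});
\item a threshold $\lambda_{l,m}$ chosen among four consecutive integers with $l/2-\lambda_{l,m}\equiv 0\pmod 4$;
\item monotonicity of the first zero within a residue class (\lemref{1st zero});
\item above the threshold, the fact that the first zero of $G_l$ lies strictly before the second zero of $G_k$ (\lemref{2nd zero}), hence strictly between the first and second zeros of $G_k$.
\end{itemize}

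The hypothesis $0.06\le\{-2m\pi\}\le 0.99$ is used exactly to keep this flip point at distance at least $0.01$ from the admissible weights, after it is shifted by a second-order correction smaller than $0.05$. This yields gaps of order $1/(l-4m\pi)^2$ that dominate the displacements. None of this case analysis is in your proposal.
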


Before we go into more details of our theorem, we recall that a sequence of real numbers $(x_n)_{n\ge1}$ is said to be uniformly distributed modulo $1$ if for every pair of real numbers $a,b$
with $0\le a<b\le 1$, we have 
$$
\lim_{N\to\infty}\frac{\#\{n\le N: \{x_n\}\in[a,b]\}}{N}=b-a.
$$
Weyl's equidistribution criterion \cite[Theorem 11.1.5]{RM} tells us that a sequence $(x_n)_{n\ge1}$ is uniformly 
distributed modulo $1$ if and only if 
$$
\sum_{n=1}^{N}e^{2\pi\iota r x_n}=o(N), \ \ \ r=\pm1,\pm2,\pm3,\ldots.
$$
In view of Weyl's equidistribution criterion, for any irrational number $\theta$, the sequence 
$(n\theta)_{n\ge1}$ is uniformly distributed modulo 1; in particular, the sequence 
$(\{-2m\pi\})_{m\le-1}$ is uniformly distributed in the interval $[0,1)$. Therefore at least $93\%$ of $m\le 0$
 satisfies the condition $0.06\le \{-2m\pi\}\le 0.99$.
 
Our proof of \thmref{main} is constituted from the ideas of the proofs of \cite[Theorem 1.1]{GKP} and of
\cite[Theorem 1]{KS2}, where this question was studied for the cases $m=0$ and $m=1$, respectively. However, unlike the previous 
cases, here we are dealing with varying $m<0$, which poses significant challenges.
 To shed some light on the difficulties in dealing with varying $m<0$, we give the structure of the proof of \thmref{main}, comparing with the proofs of \cite[Theorem 1.1]{GKP} and
\cite[Theorem 1]{KS2}, in the next section.

\section{Notations and structure of the proof}\label{struc}
Let $m<0$ be an integer. Let $I:=[\pi/2,2\pi/3]$, $I^{\circ}:=(\pi/2,2\pi/3)$ and $m(k)$ be the integer defined by writing the even integer $k$ as $k=12m(k)+s$, for $s\in\{0,4,6,8,10,14\}$. R. A. Rankin \cite{RR} proved that all the zeros of $G_k(z,m)$ in $\F$ are on the arc $A$. Further, exactly $m(k)-m$ many zeros of $G_k(z,m)$ in $\F$ are on the arc $A^{\circ}:=\{e^{\iota\theta}:\theta\in(\pi/2,2\pi/3)\}$.
He proved it by showing that for $\theta\in I^{\circ}$, the real valued function $H_{k,m}(\theta):=e^{2\pi m\sin\theta}e^{\iota k\theta/2}G_k(e^{\iota\theta},m)$ 
changes sign exactly $m(k)-m+1$ many times in the interval $I^{\circ}$, giving the existence of at least $m(k)-m$ many zeros on $I^{\circ}$, which is exact 
due to the valence formula. Following \cite[\S2]{KS1}, we write $H_{k,m}(\theta)$ as
\begin{equation}\label{hkm}
H_{k,m}(\theta)=2\cos b_{k,m} (\theta)+T_{k,m}(\theta),
\end{equation}
where
$$
T_{k,m}(\theta)=
(-1)^m P_{k,m}(\theta)
+(-1)^mQ_{k,m}(\theta)+e^{2\pi m \sin\theta}R_{k,m}(\theta),
$$
$$
b_{k,m}(\theta)=\frac{k\theta}{2} +2m\pi\cos\theta,\ \
P_{k,m}(\theta)=\frac{e^{m\pi(2\sin\theta-\tan(\theta/2))}}{(2\cos(\theta/2))^k}, \ \  Q_{k,m}(\theta)= \frac{e^{m\pi(2\sin\theta-\cot(\theta/2))}}{(2i\sin(\theta/2))^k},
$$
and 
$$
R_{k,m}(\theta):=\frac{1}{2}\sum_{c,d\in\Z \atop {c^2+d^2 \ge 5 \atop \gcd(c,d)=1}} 
\frac{e^{2\pi i m \gamma(e^{i\theta})}}{(ce^{i\frac{\theta}{2}}+de^{-i\frac{\theta}{2}})^k}.
$$
We note the following facts about these functions from \cite[\S2]{KS1}.
\begin{itemize}
\item[(a)] The function $b_{k,m}(\theta)$ is an increasing function on $I$.
\item[(b)] The function $P_{k,m}(\theta)$ is an increasing function on $I$.
\item[(c)] On $I$, the function $Q_{k,m}(\theta)$ satisfies the inequality
\begin{equation}\label{qkm}
|Q_{k,m}(\theta)| \le\frac{e^{m\pi}}{2^{k/2}}.
\end{equation}
\item[(d)] On $I$, we have
\begin{equation}\label{rkm}
|e^{2 \pi m\sin\theta}R_{k,m}(\theta)| \le \frac{0.359e^{1.039m\pi}}{2^{k/2}}.
\end{equation}
\end{itemize}

R. A. Rankin \cite{RR} established that the first $m(k)-m$ many zeros of $\cos b_{k,m}(\theta)$ in $I^{\circ}$ corresponds to the zeros of 
$H_{k,m}(\theta)$ in $I^{\circ}$. Let 
$$
\alpha_{k,m}^{(1)}<\alpha_{k,m}^{(2)}<\cdots<\alpha_{k,m}^{(m(k)-m)}
$$  
denote the first $m(k)-m$ many zeros of $\cos b_{k,m}(\theta)$ in $I^{\circ}$ and 
\begin{equation}\label{order}
\tilde{\alpha}_{k,m}^{(1)}<\tilde{\alpha}_{k,m}^{(2)}<\cdots<\tilde{\alpha}_{k,m}^{(m(k)-m)}
\end{equation} 
denote the corresponding zeros of  $H_{k,m}(\theta)$ in $I^{\circ}$. Let $r_k$ denote the remainder of the integer $k$ modulo $4$. Since $b_{k,m}(\pi/2)=k\pi/4$, for $1\le i\le m(k)-m$, we note the following
\begin{equation}\label{bkma}
b_{k,m}(\alpha_{k,m}^{(i)})=\frac{k\pi}{4}+\frac{r_k\pi}{4}+\frac{\pi}{2}+(i-1)\pi.
\end{equation}

\subsection{Structutre of the proof} For any even integer, $l\ge50$ and $m\le -1$ satisfying 
$0.06\le\{-2m\pi\}\le0.99$, the key idea of the proof is to find a zero of $G_l(z,m)$ which is not a zero of $G_k(z,m)$ for any even integer
$50\le k<l$. In view of this, we assume that for even integers $k_i,l_j\ge 50$ such that $k_i\neq l_j$ for every $1\le i\le s, 1\le j\le t$ and $m\le -1$ satisfying 
$0.06\le\{-2m\pi\}\le0.99$, we have 
$$
\prod_{i=1}^sG_{k_i}(z,m)=\prod_{j=1}^tG_{l_j}(z,m).
$$
Without loss of generality, suppose that  $l_1=\max\{k_i,l_j:1\le i\le s,1\le j\le t\}$. Then for $1\le i\le s$, we have $l_1>k_i$. Therefore
there is a zero of $G_{l_1}(z,m)$ which is not a zero of any of $G_{k_i}(z,m)$ for any $1\le i\le s$, which is a contradiction and this proves
\thmref{main}. The similar idea was used for the proof of  \cite[Theorem 1.1]{GKP} and \cite[Theorem 1]{KS2} for $m=0$ and $m=1$, respectively. 
Below, we give the 
key steps used in the proof of \cite[Theorem 1]{KS2} to find a zero of $G_l(z,1)$ different from all the 
zeros of $G_k(z,1)$ for any even integers $36\le k<l$.

For any even integers $l>k\ge36$ such that $(r_k,r_l)=(2,0)$ and $r_k=r_l$, it was establised in \cite{KS2} that 
\begin{equation}\label{m=1}
\tilde{\alpha}_{l,1}^{(1)}<\tilde{\alpha}_{k,1}^{(1)}<\tilde{\alpha}_{k,1}^{(2)}<\cdots<\tilde{\alpha}_{k, 1}^{(m(k)-1)}.
\end{equation}
However the case $(r_k,r_l)=(0,2)$ was divided into two parts $k< l/2+6$ and $ l/2+6<k<l$. For $k< l/2+6$ the ordering \eqref{m=1} holds, and for $l/2+6<k<l$, one has
$$
\tilde{\alpha}_{k,1}^{(1)}<\tilde{\alpha}_{l,1}^{(1)}<\tilde{\alpha}_{k,1}^{(2)}<\cdots<\tilde{\alpha}_{k, 1}^{(m(k)-1)}.
$$
This proves the existence of a zero of $G_l(z,1)$ distinct from all the zeros of $G_k(z,1)$ for $l>k\ge 36$.

For the proof of \thmref{main}, in the cases $(r_k,r_l)=(2,0)$ and $r_k=r_l$, we prove
$\tilde\alpha_{l,m}^{(1)}<\tilde\alpha_{k,m}^{(1)}$, and this is done in \S\ref{pmain}. However, the case $(r_k,r_l)=(0,2)$ is not fairly straightforward, as in this for some even $k<l$, there is no natural ordering among
$\tilde\alpha_{k,m}^{(1)}$ and $\tilde\alpha_{l,m}^{(1)}$. If there exists a $\lambda_{l,m}\in \Z$ such that $l/2-\lambda_{l,m}<l$, $r_{(l-2\lambda_{l,m})/2}=0$ and
\begin{align*}
    &\tilde \alpha_{(l-2\lambda_{l,m})/2,m}^{(1)}<\tilde\alpha_{l,m}^{(1)},\ \ \text{for} \ \ l/2-\lambda_{l,m}\ge 50  \ \ \\
&\tilde \alpha_{(l-2\lambda_{l,m}-8)/2,m}^{(1)}<\tilde\alpha_{l,m}^{(1)},\ \ \text{for} \ \  l/2-\lambda_{l,m}-4\ge 50,  \ \ 
\end{align*}
then for any even $50\le k<l$ such that $(r_k,r_l)=(0,2)$, we have
\begin{align*}
&\tilde\alpha_{l,m}^{(1)}<\tilde\alpha_{(l-2\lambda_{l,m}-8)/2,m}^{(1)}\le\tilde\alpha_{k,m}^{(1)} \ \ \text{for}\ \
k\le (l-2\lambda_{l,m}-8)/2, \\
&\tilde\alpha_{k,m}^{(1)}\le \tilde\alpha_{(l-2\lambda_{l,m})/2,m}^{(1)}<\tilde\alpha_{l,m}^{(1)}
\ \ \text{for}\ \ k\ge (l-2\lambda_{l,m})/2.
\end{align*}
Since $\tilde\alpha_{l,m}^{(1)}$
changes with $l$ and $m$, the integer $\lambda_{l,m}$ depends on both $l$ and $m$.
In \cite{KS2}, for the case of $m=1$, the choice of $\lambda_{l,m}$ was $-7$ or $-9$, and in \cite{GKP}, for the case of $m=0$, it was $-1$ or $-3$. Since in this case we are dealing with an arbitrary $m<0$, there is no obvious 
choice for $\lambda_{l,m}$ and it needs to be computed separately. One of the key 
obstacles in doing so is that there is no closed formula for the zeros of 
$\cos b_{k,m}(\theta)$ in $I^{\circ}$, which makes the study more complicated here.
 The integer $\lambda_{l,m}$ for any $l\ge 50$ and $m\le-1$, satisfying $0.06\le \{-2m\pi\}\le0.99$, is introduced in the next section. With this value of $\lambda_{l,m}$ we prove the 
following ordering in \S\ref{pmain}. For $l>k\ge50$ be even integers
\begin{itemize}
\item For $(r_k,r_l)=(2,0)$ and $r_k=r_l$, we prove
$$
\tilde\alpha_{l,m}^{(1)}<\tilde\alpha_{k,m}^{(1)}<\tilde\alpha_{k,m}^{(2)}<\cdots<\tilde\alpha_{k,m}^{(m(k)-m)}.
    $$
    \item  For $(r_k,r_l)=(0,2)$ with $50\le k\le l/2-\lambda_{l,m}-4<l$, we prove
    $$
\tilde\alpha_{l,m}^{(1)}<\tilde\alpha_{k,m}^{(1)}<\tilde\alpha_{k,m}^{(2)}<\cdots<\tilde\alpha_{k,m}^{(m(k)-m)},
    $$
and for $50\le l/2-\lambda_{l,m}\le k<l$, we prove
$$
\tilde\alpha_{k,m}^{(1)}<\tilde\alpha_{l,m}^{(1)}<\tilde\alpha_{k,m}^{(2)}<\cdots<\tilde\alpha_{k,m}^{(m(k)-m)}.
$$
\end{itemize}
This gives us a zero of $G_l(z,m)$, which is not a zero of $G_k(z,m)$ for any $50\le k<l$ and $m\le-1$ satisfying $0.06\le \{-2m\pi\}\le 0.99$.

\section{Computation of $\lambda_{l,m}$}
Since the zeros of $H_{k,m}(\theta)$ correspond to the zeros of $\cos b_{k,m}(\theta)$ in $I^{\circ}$, to show the 
required ordering among the zeros of $H_{k,m}(\theta)$ and $H_{l,m}(\theta)$, first we prove it
among the zeros of $\cos b_{k,m}(\theta)$ and $\cos b_{l,m}(\theta)$. Let $m<0$ be such that $0.06\le\{-2m\pi\}\le0.99$ and $l\ge 50$ be even such that 
$r_l=2$. The goal of this section is to find an integer $\lambda_{l,m}$  such that $l/2-\lambda_{l,m}<l$, $r_{(l-2\lambda_{l,m})/2}=0$ and
$$
    \alpha_{(l-2\lambda_{l,m})/2,m}^{(1)}<\alpha_{l,m}^{(1)}, \ \ \text{for}\ \ l/2-\lambda_{l,m}\ge 50,\ \  \text{and}
    $$
    $$
\alpha_{(l-2\lambda_{l,m}-8)/2,m}^{(1)}>\alpha_{l,m}^{(1)},    \ \ \text{for}\ \ l/2-\lambda_{l,m}-4\ge50.
$$

The primary difficulty here is that we do not have a closed formula for the 
zeros of $\cos b_{k,m}(\theta)$, unlike the case $m=0$.
Since we do not have an exact formula for the zeros of $\cos b_{k,m}(\theta)$ in $I^{\circ}$ first we obtain 
the bounds for $\alpha_{k,m}^{(1)}$. The linear approximation used in \cite{KS2} does not seem to give very good bounds here, especially the upper bound. Therefore, we do one more step after obtaining bounds via the linear approximation to get the final upper bound. 
First, we give the following general upper bound 
on $\alpha_{k,m}^{(i)}$, for $1\le i\le m(k)-m$, which is useful for the final computation of upper and lower bounds of $\alpha_{k,m}^{(1)}.$

\begin{lem}\label{1st zero}
Let $k'>k\ge50$ even integers be such that $r_k=r_{k'}$ and $m'\le m\le -1$. For $1\le i\le m(k)-m$ we have $\alpha_{k',m'}^{(i)}<\alpha_{k,m}^{(i)}$.
\end{lem}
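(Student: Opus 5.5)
The plan is to compare the two phase functions $b_{k',m'}$ and $b_{k,m}$ directly, using the explicit description \eqref{bkma} of the zeros together with the monotonicity fact (a). By \eqref{bkma} and fact (a), $\alpha_{k,m}^{(i)}$ is the unique $\theta\in I^{\circ}$ with $b_{k,m}(\theta)=c_{k}^{(i)}$, where
$$
c_{k}^{(i)}:=\frac{k\pi}{4}+\frac{r_k\pi}{4}+\frac{\pi}{2}+(i-1)\pi .
$$
Since $r_k=r_{k'}$, the targets satisfy $c_{k'}^{(i)}-c_{k}^{(i)}=(k'-k)\pi/4$. The index range is also fine: $m(k')\ge m(k)$ and $-m'\ge -m$, so every $1\le i\le m(k)-m$ also satisfies $i\le m(k')-m'$. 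Hence $\alpha_{k',m'}^{(i)}$ is defined, and it is characterised by $b_{k',m'}(\alpha_{k',m'}^{(i)})=c_{k'}^{(i)}$.

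The key step is a comparison of the difference of phases with the difference of targets. On $I$ we have
$$
b_{k',m'}(\theta)-b_{k,m}(\theta)=\frac{(k'-k)\theta}{2}+2(m'-m)\pi\cos\theta .
$$
For $\theta\in I^{\circ}$ we have $\theta>\pi/2$ and $k'>k$, so the first term is strictly greater than $(k'-k)\pi/4$. We also have $\cos\theta<0$ and $m'-m\le 0$, so the second term is $\ge 0$. Therefore, for all $\theta\in I^{\circ}$,
$$
b_{k',m'}(\theta)-c_{k'}^{(i)}>b_{k,m}(\theta)-c_{k}^{(i)} .
$$
Evaluating at $\theta=\alpha_{k,m}^{(i)}\in I^{\circ}$ gives $b_{k',m'}(\alpha_{k,m}^{(i)})>c_{k'}^{(i)}$.

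To conclude, note that $b_{k',m'}$ is increasing on $I$ by (a). It satisfies $b_{k',m'}(\pi/2)=k'\pi/4<c_{k'}^{(i)}$ and $b_{k',m'}(\alpha_{k,m}^{(i)})>c_{k'}^{(i)}$. By continuity, the unique point where $b_{k',m'}$ takes the value $c_{k'}^{(i)}$ lies in $(\pi/2,\alpha_{k,m}^{(i)})$, and this point is $\alpha_{k',m'}^{(i)}$. This gives $\alpha_{k',m'}^{(i)}<\alpha_{k,m}^{(i)}$.

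The only delicate point is the identification of the $i$-th zero of $\cos b_{k',m'}$ in $I^{\circ}$ with the solution of $b_{k',m'}=c_{k'}^{(i)}$. This follows from \eqref{bkma} applied to $(k',m')$, which is legitimate because $i\le m(k')-m'$ and $b_{k',m'}$ is increasing, so its zeros appear in order. No analytic estimates are needed beyond the signs of $\cos\theta$ and $m'-m$ on $I$.
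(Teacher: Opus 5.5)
Your proof is correct and is essentially the paper's argument. Both rest on the identity $b_{k',m'}(\theta)-b_{k,m}(\theta)=\frac{(k'-k)\theta}{2}+2(m'-m)\pi\cos\theta$, together with \eqref{bkma} and the sign of $\cos\theta$ on $I^{\circ}$; the only difference is that the paper evaluates $b_{k,m}$ at $\alpha_{k',m'}^{(i)}$ and uses monotonicity of $b_{k,m}$, while you evaluate $b_{k',m'}$ at $\alpha_{k,m}^{(i)}$ and use monotonicity of $b_{k',m'}$. Your extra check that $i\le m(k')-m'$, which the paper omits, is right, but $m(k')\ge m(k)$ needs $k'\ge k+4$, which follows from $r_k=r_{k'}$; it can fail for $k'=k+2$, e.g.\ $m(62)=4<m(60)=5$.
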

\begin{proof}
We consider the following, 
$$
b_{k,m}(\alpha_{k,m}^{(i)})-b_{k,m}(\alpha_{k',m'}^{(i)})=
b_{k,m}(\alpha_{k,m}^{(i)})-b_{k',m'}(\alpha_{k',m'}^{(i)})+\frac{(k'-k)\alpha_{k',m'}^{(i)}}{2}+2\pi(m'-m)\cos\alpha_{k',m'}^{(i)}.
$$
Since $r_k=r_{k'}$, using \eqref{bkma}, we have
$$ 
b_{k,m}(\alpha_{k,m}^{(i)})-b_{k,m}(\alpha_{k',m'}^{(i)})=\frac{(k'-k)}{2}\left(\alpha_{k',m'}^{(i)}-\frac{\pi}{2}\right)+2\pi(m'-m)\cos\alpha_{k',m'}^{(i)}.
$$
Since $m'\le m$, $k'>k$ and $\cos\alpha_{k',m'}^{(i)}<0$, we have $b_{k,m}(\alpha_{k,m}^{(i)})-b_{k,m}(\alpha_{k',m'}^{(i)})>0$.
As $b_{k,m}(\theta)$ is an increasing function, we have $\alpha_{k,m}^{(i)}>\alpha_{k',m'}^{(i)}$.
\end{proof}

\begin{rmk}\label{1st z}
In view of \lemref{1st zero}, we note that for any $k'>k\ge50$ be even integers 
and $m'\le m\le -1$, we have $\alpha_{k',m'}^{(i)}\le \max\{\alpha_{k,m}^{(i)},\alpha_{k+2,m}^{(i)}\}$.
\end{rmk}
Using this, we prove the following.
\begin{lem}\label{ulb}
For $m<0$, we have 
$$
\frac{\pi}{2}+\frac{\pi}{k-4m\pi}\left(\frac{r_k}{2}+1\right)<{\alpha}_{k,m}^{(1)}<\frac{\pi}{2}+\frac{\pi}{k-4m\pi\left(1-\frac{1}{2}\left(\frac{\pi(2+r_k)}{2(k-3.96m\pi)}\right)^2\right)}\left(\frac{r_k}{2}+1\right).
$$
\end{lem}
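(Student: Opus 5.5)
Write $\alpha=\alpha_{k,m}^{(1)}$ and $x=\alpha-\pi/2\in(0,\pi/6)$. By \eqref{bkma} with $i=1$, together with $b_{k,m}(\pi/2)=k\pi/4$, the first zero satisfies
$$
\frac{kx}{2}+2m\pi\cos\Bigl(\frac{\pi}{2}+x\Bigr)=\frac{kx}{2}-2m\pi\sin x=\frac{\pi}{2}\Bigl(\frac{r_k}{2}+1\Bigr).
$$
This is the one equation to analyse. Since $m<0$, the left side is $\frac{k}{2}x+2|m|\pi\sin x$, which is increasing in $x$. The two bounds will come from replacing $\sin x$ by a larger quantity (lower bound) and by a smaller quantity (upper bound).

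\textbf{Lower bound.} Use $\sin x<x$ for $x>0$. Then
$$
\frac{\pi}{2}\Bigl(\frac{r_k}{2}+1\Bigr)<\frac{k}{2}x-2m\pi x=\frac{(k-4m\pi)x}{2}.
$$
This gives $x>\frac{\pi}{k-4m\pi}\left(\frac{r_k}{2}+1\right)$, which is exactly the left inequality.

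\textbf{Upper bound, first pass.} Use $\sin x> x-x^3/6$, so that $\sin x>x(1-x^2/6)$. Then
$$
\frac{\pi}{2}\Bigl(\frac{r_k}{2}+1\Bigr)>\frac{x}{2}\Bigl(k-4m\pi\bigl(1-\tfrac{x^2}{6}\bigr)\Bigr).
$$
Solving for $x$ gives $x<\frac{\pi(r_k/2+1)}{k-4m\pi(1-x^2/6)}$. It remains to replace the $x$ on the right by an explicit upper bound $x_0$. The target form $1-\frac12\bigl(\frac{\pi(2+r_k)}{2(k-3.96m\pi)}\bigr)^2$ suggests $x_0=\frac{\pi(2+r_k)}{2(k-3.96m\pi)}$, with the coefficient $\frac12$ serving as a weakened $\frac16$. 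The point of using the weaker coefficient is that it absorbs the error from the preliminary bound, so the constants are not sharp.

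\textbf{Main obstacle: the preliminary bound $x<x_0$.} This is the "one more step" the authors mention. I would get it from the linear lower bound on $\sin$ over the relevant range, via concavity: $\sin x\ge \frac{\sin x_1}{x_1}x$ for $0\le x\le x_1$. The steps are:
\begin{enumerate}
\item Bound $x$ a priori. Since $k\ge50$, $m\le-1$ and $r_k\le 2$, we have $x\le \frac{2\pi}{k}\le\frac{2\pi}{50}$. If a uniform bound is preferred, \rmkref{1st z} and \lemref{1st zero} give $\alpha_{k,m}^{(1)}\le\max\{\alpha_{50,-1}^{(1)},\alpha_{52,-1}^{(1)}\}$, which can be computed numerically.
\item Let $x_1$ be this a priori bound. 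On $[0,x_1]$ we have $\frac{\sin x_1}{x_1}\ge 0.99$, so $\sin x\ge 0.99x$.
\item The first-zero equation then gives
$$
\frac{\pi}{2}\Bigl(\frac{r_k}{2}+1\Bigr)\ge\frac{x}{2}\bigl(k-4(0.99)m\pi\bigr)=\frac{x}{2}(k-3.96m\pi).
$$
Hence $x\le x_0=\frac{\pi(2+r_k)}{2(k-3.96m\pi)}$.
\end{enumerate}

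\textbf{Conclusion.} Substituting $x\le x_0$ into the first-pass inequality gives $1-x^2/6\ge 1-x_0^2/6$. Since $-4m\pi>0$, this makes the denominator $k-4m\pi(1-x^2/6)$ larger, so the bound on $x$ only weakens. A fortiori $1-x_0^2/6\ge1-x_0^2/2$, and this yields the stated upper bound. The only delicate point is checking the constant $0.99$ in the concavity step. For $x_1\approx 2\pi/50\approx 0.126$ we get $\sin x_1/x_1\approx 1-x_1^2/6\approx0.9974$, so the margin is comfortable.
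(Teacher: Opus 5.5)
Your proof is correct and follows essentially the paper's route. The lower bound comes from the linear comparison $\sin x<x$. A crude upper bound $x\le \frac{\pi(2+r_k)}{2(k-3.96m\pi)}$ comes from $\sin x\ge 0.99x$ on an a priori range, and a second-order refinement on that short interval then gives the stated bound. The differences are only in presentation: you work directly with the scalar equation $\frac{kx}{2}-2m\pi\sin x=\frac{\pi}{2}\bigl(\frac{r_k}{2}+1\bigr)$ instead of the comparison functions $L_{k,m},A_{k,m},\tilde A_{k,m}$, you obtain the a priori range $x<2\pi/k$ from that equation rather than from the numerical bound $\alpha^{(1)}_{k,m}\le 1.68$ via \lemref{1st zero}, and you use $\sin x>x-x^3/6$ (then weaken $\frac16$ to $\frac12$) where the paper uses $\sin\theta\ge\cos x_0>1-\frac{x_0^2}{2}$.
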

\begin{proof}
Using \eqref{bkma}, we have
\begin{equation}\label{bkm}
b_{k,m}({\alpha}_{k,m}^{(1)})=\frac{k\pi}{4}+\frac{r_k\pi}{4}+\frac{\pi}{2}.
\end{equation}
Let $L_{k,m}(\theta)={k\theta}/{2}-2\pi m\left(\theta-{\pi}/{2}\right)$. Then $L_{k,m}(\theta)-b_{k,m}(\theta)$ is an 
increasing function of $\theta\in I^{\circ}$. This is because its derivative $-2\pi m+2\pi m\sin\theta$ is positive for 
$\theta\in I^{\circ}$. Since $L_{k,m}(\pi/2)=b_{k,m}(\pi/2)$, we have $L_{k,m}(\theta)-b_{k,m}(\theta)>0$, for all $\theta\in I^{\circ}$.
If $\mu_1$ is a point such that 
$$
L_{k,m}(\mu_1)=\frac{k\pi}{4}+\frac{r_k\pi}{4}+\frac{\pi}{2},
$$
then $\mu_1 < {\alpha}_{k,m}^{(1)}$. This is because $b_{k,m}(\theta)$ is an increasing function for $\theta\in I^{\circ}$ and
\begin{align*}
    b_{k,m}({\alpha}_{k,m}^{(1)})-b_{k,m}(\mu_1)
&=b_{k,m}({\alpha}_{k,m}^{(1)})-L_{k,m}(\mu_1)+L_{k,m}(\mu_1)-b_{k,m}(\mu_1)\\
&=L_{k,m}(\mu_1)-b_{k,m}(\mu_1)>0,
\end{align*}
$$
\mu_1 = \frac{\pi}{2}+\frac{\pi}{k-4m\pi}\left(\frac{r_k}{2}+1\right)<{\alpha}_{k,m}^{(1)}.
$$
For an even integer $k\ge 50$ and an integer $m\le -1$, using \rmkref{1st z}, we have $\alpha_{k,m}^{(1)}\le \max\{\alpha_{50,-1}^{(1)},\alpha_{52,-1}^{(1)}\}
\le 1.68$. Let $I_1:=(\pi/2,1.68)$. Then for $k\ge 50$ even and $m\le-1$, we have $\alpha_{k,m}^{(1)}\in I_1$. For $\theta\in I_1$, we define $A_{k,m}(\theta)={k\theta}/{2}-1.98m\pi\left(\theta-{\pi}/{2}\right)$. Then the function 
$b_{k,m}(\theta)-A_{k,m}(\theta)$ is positive for $\theta\in I_1$ as $2\sin(\theta)>1.98$ for 
$\theta\in I_1$. Let $\mu_2$ be such that 
$$
A_{k,m}(\mu_2)=\frac{k\pi}{4}+\frac{r_k\pi}{4}+\frac{\pi}{2}.
$$
Then 
$$
\mu_2=\frac{\pi}{2}+\frac{\pi}{k-3.96m\pi}\left(\frac{r_k}{2}+1\right)\le \frac{\pi}{2}+\frac{2\pi}{50+3.96\pi}<1.68.
$$
Analysing similarly we have $\mu_2 > {\alpha}_{k,m}^{(1)}$. 

The upper bound obtained this way is quite far from the lower bound, for the large values of $m$, and is not 
useful for the computation of $\lambda_{l,m}$. Therefore, we obtain another upper bound, which is much closer to the lower bound. 
For that, we already have
$$
{\alpha}_{k,m}^{(1)}\in I_2:=\left(\frac{\pi}{2},\frac{\pi}{2}+\frac{\pi}{k-3.96m\pi}\left(\frac{r_k}{2}+1\right)\right).
$$
In this interval, we consider the function. 
$$
\tilde A_{k,m}(\theta)=\frac{k\theta}{2}-2m\pi\left(1-\frac{1}{2}\left(\frac{\pi(2+r_k)}{2(k-3.96m\pi)}\right)^2\right)\left(\theta-\frac{\pi}{2}\right).
$$
Next we prove that $b_{k,m}(\theta)-\tilde A_{k,m}(\theta)>0$ for $\theta\in I_2$. We consider
$$
f_{k,m}(\theta)=b_{k,m}(\theta)-\tilde A_{k,m}(\theta)=2\pi m\cos\theta+2m\pi\left(1-\frac{1}{2}\left(\frac{\pi(2+r_k)}{2(k-3.96m\pi)}\right)^2\right)\left(\theta-\frac{\pi}{2}\right).
$$
Now
$$
f_{k,m}'(\theta)=-2m\pi \sin\theta+2m\pi\left(1-\frac{1}{2}\left(\frac{\pi(2+r_k)}{2(k-3.96m\pi)}\right)^2\right).
$$
For $\theta\in I_2$ and $m<0$, the function $-2\pi m\sin\theta$ is positive, and it is a decreasing function. Therefore, a lower bound is as follows:
\begin{align*}
-2\pi m\sin\theta&>-2\pi m\sin\left(\frac{\pi}{2}+\frac{\pi(r_k+2)}{2(k-3.96m\pi)}\right)\\
&=-2\pi m\cos \left(\frac{\pi(r_k+2)}{2(k-3.96m\pi)}\right)\\
&>-2\pi m \left(1-\frac{1}{2}\left(\frac{\pi(r_k+2)}{2(k-3.96m\pi)}\right)^2\right).
\end{align*}
Hence $f_{k,m}(\theta)$ is an increasing function for $\theta\in I_2$ and $f_{k,m}(\pi/2)=0$. So $b_{k,m}(\theta)-
\tilde A_{k,m}(\theta)>0$ for $\theta\in I_2$. Now if $\mu_3$ be such that 
$$
\tilde A_{k,m}(\mu_3)=\frac{k\pi}{4}+\frac{r_k\pi}{4}+\frac{\pi}{2},
$$
then 
$$
\mu_3=\frac{\pi}{2}+\frac{\pi}{k-4m\pi\left(1-\frac{1}{2}\left(\frac{\pi(2+r_k)}{2(k-3.96m\pi)}\right)^2\right)}\left(\frac{r_k}{2}+1\right).
$$
For $k\ge 50$, even and $m\le-1$, 
$$
\frac{1}{2}\left(\frac{\pi(2+r_k)}{2(k-3.96m\pi)}\right)^2<0.01.
$$
Therefore, 
$$
k-4m\pi\left(1-\frac{1}{2}\left(\frac{\pi(2+r_k)}{2(k-3.96m\pi)}\right)^2\right)\ge k-3.96m\pi.
$$
Hence $\mu_3\in I_2$, and following analysis similar to earlier we get
$$
{\alpha}_{k,m}^{(1)}<\mu_3.
$$.
\end{proof}

Next, we prove the following lemma.
\begin{lem}\label{cons}
Let $m<0$ be such that  $0.06\le \{-2m\pi\}\le 0.99$. Then for any $l\ge 50$ even, there are exactly $4$ 
consecutive integers in the following interval
$$J_{l,m}:=\left[-2m\pi-4,-2m\pi+m\pi\left(\frac{2\pi}{l-3.96m\pi}\right)^2\right].$$
\end{lem}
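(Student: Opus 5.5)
Write $x:=-2m\pi>0$, so $\{x\}\in[0.06,0.99]$, and set
$$\varepsilon_{l,m}:=-m\pi\left(\frac{2\pi}{l-3.96m\pi}\right)^2>0.$$
Then $J_{l,m}=[x-4,\,x-\varepsilon_{l,m}]$. The plan is to show that $0<\varepsilon_{l,m}<0.06$ for all even $l\ge 50$ and all $m\le -1$, and then count integers directly.

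\textbf{Counting step.} Write $x=n+\{x\}$ with $n=\lfloor x\rfloor$. Since $\{x\}\ge 0.06>\varepsilon_{l,m}$, we have
$$n<x-\varepsilon_{l,m}<n+1.$$
So the largest integer in $J_{l,m}$ is $n$. The left endpoint is $x-4=n-4+\{x\}$, and $0<\{x\}<1$, so $x-4$ is not an integer and lies strictly between $n-4$ and $n-3$. Hence the integers in $J_{l,m}$ are exactly $n-3,n-2,n-1,n$, which is four consecutive integers. The upper bound $\{x\}\le 0.99$ only rules out $\{x\}=0$; that case is already excluded since $x$ is irrational.

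\textbf{Bounding $\varepsilon_{l,m}$.} Put $M:=-m\ge 1$, so that
$$\varepsilon_{l,m}=\frac{4\pi^3 M}{(l+3.96\pi M)^2}.$$
This is decreasing in $l$, so it suffices to take $l=50$ and bound
$$g(M)=\frac{4\pi^3M}{(50+3.96\pi M)^2}.$$
As a function of real $M>0$, $g$ is maximised at $3.96\pi M=50$. There it equals
$$\frac{4\pi^3\cdot 50/(3.96\pi)}{100^2}=\frac{200\pi^2}{3.96\cdot 10^4}\approx 0.0498<0.06.$$
Alternatively, the AM–GM bound $(50+3.96\pi M)^2\ge 4\cdot 50\cdot 3.96\pi M$ gives $g(M)\le \pi^2/(50\cdot 3.96)\approx 0.0498$ directly.

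\textbf{Main obstacle.} The only delicate point is this uniform bound, since the lemma must hold for all $m<0$ at once. The AM–GM estimate settles it cleanly, and it also shows why the threshold $0.06$ on $\{-2m\pi\}$ together with the restriction $l\ge 50$ is what makes the count exact.
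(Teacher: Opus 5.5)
Your proof is correct and follows essentially the same route as the paper. Both arguments bound the correction term $-m\pi\left(\frac{2\pi}{l-3.96m\pi}\right)^2$ uniformly by its maximum over $m$, namely $\pi^2/(3.96\,l)\le\pi^2/198<0.05$, and then use $\{-2m\pi\}\ge 0.06$ to conclude; your explicit identification of the integers $n-3,\dots,n$ simply spells out the paper's one-line final step. One side remark should be corrected: the hypothesis $\{-2m\pi\}\le 0.99$ plays no role in this lemma (it is only needed later, to get $\lambda_{l,m}\ge -2m\pi-3.99$), and $\{x\}\neq 0$ already follows from $\{x\}\ge 0.06$.
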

\begin{proof}
First we prove that for $l\ge 50$ even and $m\le-1$, we have
$$
0<-m\pi\left(\frac{2\pi}{l-3.96m\pi}\right)^2<0.05.
$$
For any $l\ge 50$, then function in middle is not monotonic in $m$, 
we consider the function
$$
h_{l}(x):=-x\pi\left(\frac{2\pi}{l-3.96x\pi}\right)^2
$$
For $x<0$, this function has a unique extremum at $x=-l/(3.96\pi)$, and as 
$x\to \infty$, $h_l(x)\to 0$, therefore
$$
h_{l}(x)\le\frac{\pi^2}{3.96l}.
$$
For $l\ge 50$ even $h_l(x)< 0.05$ and therefore for any $m\le-1$, we have
\begin{equation}\label{0.09}
0<-m\pi\left(\frac{2\pi}{l-3.96m\pi}\right)^2<0.05.
\end{equation}
So, the condition $\{-2m\pi\}\ge 0.06$ ensures that there are exactly $4$ consecutive integers 
in the given range, which proves the lemma.
\end{proof}

In view of \lemref{cons}, for any integer $l>50$, even and an integer $m<0$ satisfying $l\equiv2\pmod 4$ and
$0.06\le\{-2m\pi\}\le0.99$, there exists an integer $\lambda_{l,m}'\in J_{l,m}$ such that 
$l/2-\lambda_{l,m}'\equiv0\pmod4$. We define $\lambda_{l,m}$ as follows:
$$
\lambda_{l,m}=
\begin{cases}
\lambda_{l,m}', & \ \text{if}\   l/2-\lambda_{l,m}'\ge52\\
l/2-52, & \ \text{if}\ l/2-\lambda_{l,m}'<52.
\end{cases}
$$

\begin{lem}\label{k<lam}
Let $l\ge 50$ even be such that $r_l=2$. Then we have 
$$
\alpha_{(l-2\lambda_{l,m})/2,m}^{(1)}<\alpha_{l,m}^{(1)}.
$$
\end{lem}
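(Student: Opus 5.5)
The plan is to compare an upper bound for $\alpha_{k,m}^{(1)}$ with a lower bound for $\alpha_{l,m}^{(1)}$, both taken from \lemref{ulb}, where $k:=(l-2\lambda_{l,m})/2=l/2-\lambda_{l,m}$. By the definition of $\lambda_{l,m}$ we have $r_k=0$ and $k\ge 50$ (in fact $k\ge 52$). Also $r_l=2$.

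\textbf{Step 1: reduce to an inequality for $\lambda_{l,m}$.} Put $c_k:=1-\frac{1}{2}\left(\frac{\pi}{k-3.96m\pi}\right)^2$. \lemref{ulb} with $r_k=0$ gives
$$\alpha_{k,m}^{(1)}<\frac{\pi}{2}+\frac{\pi}{k-4m\pi c_k},$$
and with $r_l=2$ it gives
$$\alpha_{l,m}^{(1)}>\frac{\pi}{2}+\frac{2\pi}{l-4m\pi}.$$
It therefore suffices to show $l-4m\pi\le 2k-8m\pi c_k$. Substituting $2k=l-2\lambda_{l,m}$, this is equivalent to
$$\lambda_{l,m}\le -2m\pi+m\pi\left(\frac{\pi}{k-3.96m\pi}\right)^2 .$$

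\textbf{Step 2: bound the correction term.} The error term $-m\pi\left(\frac{\pi}{k-3.96m\pi}\right)^2$ is handled exactly as in the proof of \lemref{cons}. Consider $x\mapsto -x\pi^3/(k-3.96x\pi)^2$ for $x<0$. Its maximum is $\pi^2/(4\cdot 3.96\,k)$, so for $k\ge 50$ we get
$$0<-m\pi\left(\frac{\pi}{k-3.96m\pi}\right)^2<\frac{\pi^2}{792}<0.013 .$$
It is therefore enough to prove $\lambda_{l,m}\le -2m\pi-0.013$.

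\textbf{Step 3: bound $\lambda_{l,m}$ itself.} Suppose first $\lambda_{l,m}=\lambda'_{l,m}\in J_{l,m}$. Then $\lambda_{l,m}$ is an integer bounded above by the right endpoint of $J_{l,m}$, which is below $-2m\pi$. Hence $\lambda_{l,m}\le\lfloor -2m\pi\rfloor=-2m\pi-\{-2m\pi\}\le -2m\pi-0.06$, using the hypothesis $\{-2m\pi\}\ge 0.06$. This is comfortably smaller than $-2m\pi-0.013$. In the other case, $\lambda_{l,m}=l/2-52<\lambda'_{l,m}$, so the same bound holds a fortiori. (Increasing $k$ only decreases the correction, but we do not even need this, since the bound in Step 2 is uniform in $k\ge 50$.) Both bounds from \lemref{ulb} are strict, so we get the strict inequality $\alpha_{k,m}^{(1)}<\alpha_{l,m}^{(1)}$.

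\textbf{Main obstacle.} The delicate point is Step 2. One must check that the quadratic correction in the refined upper bound of \lemref{ulb} stays uniformly below the gap $\{-2m\pi\}\ge 0.06$ for all $m\le -1$ and all admissible $k$. This is exactly why the cruder linear bound $\mu_2$ (with the $3.96$ constant) would not suffice: the loss from replacing $4$ by $3.96$ grows linearly in $|m|$ and would overwhelm the fixed gap $0.06$.
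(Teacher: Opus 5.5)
Your proof is correct. It differs from the paper's route, and Step 1 contains a harmless arithmetic slip.

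\textbf{The slip.} Write $x=\pi/(k-3.96m\pi)$, so $c_k=1-x^2/2$. The condition $l-4m\pi\le 2k-8m\pi c_k$ with $2k=l-2\lambda_{l,m}$ becomes $2\lambda_{l,m}\le -4m\pi+4m\pi x^2$, i.e. $\lambda_{l,m}\le -2m\pi+2m\pi x^2$. You lost a factor $2$ in the correction term. By your Step 2, the corrected term $-2m\pi x^2$ is below $2\pi^2/792<0.025$. This is still well inside the gap $\{-2m\pi\}\ge 0.06$, so Step 3 goes through verbatim.

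\textbf{Comparison with the paper.} The paper does not compare two explicit bounds. With $k=l/2-\lambda_{l,m}$, it evaluates $b_{k,m}$ at $\alpha^{(1)}_{l,m}$. It then uses the exact values $b_{l,m}(\alpha^{(1)}_{l,m})=l\pi/4+\pi$ and $b_{k,m}(\alpha^{(1)}_{k,m})=k\pi/4+\pi/2$ to get $b_{k,m}(\alpha^{(1)}_{l,m})-b_{k,m}(\alpha^{(1)}_{k,m})=\frac{l+2\lambda_{l,m}}{4}\bigl(\frac{\pi}{2}-\alpha^{(1)}_{l,m}\bigr)+\frac{\pi}{2}$. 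After that it needs only the refined upper bound of Lemma~\ref{ulb} for $\alpha^{(1)}_{l,m}$, and monotonicity of $b_{k,m}$ finishes.

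What each route buys:
\begin{itemize}
\item The paper's requirement comes out as exactly ``$\lambda_{l,m}$ does not exceed the right endpoint of $J_{l,m}$''. So this argument is what dictates the shape of $J_{l,m}$, and the hypothesis $\{-2m\pi\}\ge 0.06$ enters only through Lemma~\ref{cons}.
\item Your route spends two approximations: the refined upper bound for $\alpha^{(1)}_{k,m}$ and the linear lower bound for $\alpha^{(1)}_{l,m}$. It also invokes the fractional-part gap directly.
\item In exchange, you never multiply an inequality by $(l+2\lambda_{l,m})/4$. That factor is negative when $l=50$, where $\lambda_{50,m}=-27$ and $k=52>l$. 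There the paper's bounding step fails as written, although the displayed identity already gives the conclusion. Your argument handles this case with no special treatment.
\end{itemize}
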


\begin{proof} Consider the following
\begin{align*}
&b_{(l-2\lambda_{l,m})/2}(\alpha_{l,m}^{(1)})-b_{(l-2\lambda_{l,m})/2}(\alpha_{(l-2\lambda_{l,m})/2,m}^{(1)})\\
&=b_{l}(\alpha_{l,m}^{(1)})-\frac{(l+2\lambda_{l,m})\alpha_{l,m}^{(1)}}{4}
-b_{(l-2\lambda_{l,m})/2}(\alpha_{(l-2\lambda_{l,m})/2,m}^{(1)}).
\end{align*}
Using \eqref{bkma}, we have
\begin{align*}
&b_{(l-2\lambda_{l,m})/2}(\alpha_{l,m}^{(1)})-b_{(l-2\lambda_{l,m})/2}(\alpha_{(l-2\lambda_{l,m})/2,m}^{(1)})\\&=\frac{l\pi}{4}+\pi-\frac{(l-2\lambda_{l,m})\pi}{8}-\frac{\pi}{2}-\frac{(l+2\lambda_{l,m})\alpha_{l,m}^{(1)}}{4}\\
&=\frac{(l+2\lambda_{l,m})}{4}\left(\frac{\pi}{2}-\alpha_{l,m}^{(1)}\right)+\frac{\pi}{2}.
\end{align*}
Now using \lemref{ulb}, we get
\begin{align}\label{k>l/2}
b_{(l-2\lambda_{l,m})/2}(\alpha_{l,m}^{(1)})-b_{(l-2\lambda_{l,m})/2}(\alpha_{(l-2\lambda_{l,m})/2,m})
>\frac{\pi}{2}-\frac{l+2\lambda_{l,m}}{4}\left(\frac{2\pi}{l-4m\pi\left(1-\frac{1}{2}\left(\frac{2\pi}{l-3.96m\pi}\right)^2\right)}\right).
\end{align}
Since $\lambda_{l,m}\le \lambda_{l,m}' \in J_{l,m}$, we have
$$
l+2\lambda_{l,m}<l-4m\pi\left(1-\frac{1}{2}\left(\frac{2\pi}{l-3.96m\pi}\right)^2\right).
$$
Hence $b_{(l-2\lambda_{l,m})/2}(\alpha_{l,m}^{(1)})>b_{(l-2\lambda_{l,m})/2}(\alpha_{(l-2\lambda_{l,m})/2,m}^{(1)})
$. As $b_{(l-2\lambda_{l,m})/2}(\theta)$ is an increasing function we have $\alpha_{l,m}^{(1)}>\alpha_{(l-2\lambda_{l,m})/2,m}^{(1)}$.

\end{proof}

\begin{lem}\label{k>lam}
Let $l\ge 50$ even be such that $r_l=2$, and $l/2-\lambda_{l,m}-4\ge50$.
Then we have 
$$
\alpha_{l,m}^{(1)}<\alpha_{(l-2\lambda_{l,m}-8)/2,m}^{(1)}.
$$
\end{lem}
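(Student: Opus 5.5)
The plan is to mirror the argument of \lemref{k<lam}, with the inequality now reversed. I would use the lower bound of \lemref{ulb} instead of the upper bound. Set $k:=(l-2\lambda_{l,m}-8)/2=l/2-\lambda_{l,m}-4$.

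First I would settle the definition of $\lambda_{l,m}$. If $\lambda_{l,m}=l/2-52$, then $k=48<50$, which the hypothesis $l/2-\lambda_{l,m}-4\ge 50$ excludes. So we may assume $\lambda_{l,m}=\lambda'_{l,m}\in J_{l,m}$. In particular $\lambda_{l,m}\ge -2m\pi-4$, and the inequality is strict because $-2m\pi$ is irrational. Also $l/2-\lambda_{l,m}\equiv 0\pmod 4$ gives $k\equiv 0\pmod 4$, so $r_k=0$, while $r_l=2$.

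Next I would compute $b_{k,m}(\alpha_{l,m}^{(1)})-b_{k,m}(\alpha_{k,m}^{(1)})$. The $2m\pi\cos\theta$ terms of $b_{k,m}$ and $b_{l,m}$ coincide, so $b_{k,m}(\theta)=b_{l,m}(\theta)-(l-k)\theta/2$. By \eqref{bkma} we have $b_{l,m}(\alpha_{l,m}^{(1)})=l\pi/4+\pi$ and $b_{k,m}(\alpha_{k,m}^{(1)})=k\pi/4+\pi/2$. Hence
$$
b_{k,m}(\alpha_{l,m}^{(1)})-b_{k,m}(\alpha_{k,m}^{(1)})=\frac{\pi}{2}-\frac{l-k}{2}\left(\alpha_{l,m}^{(1)}-\frac{\pi}{2}\right),
$$
where $l-k=l/2+\lambda_{l,m}+4$. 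It then suffices to show this quantity is negative, that is,
$$
(l+2\lambda_{l,m}+8)\left(\alpha_{l,m}^{(1)}-\frac{\pi}{2}\right)>2\pi.
$$
Once that holds, monotonicity of $b_{k,m}$ (fact (a)) gives $\alpha_{l,m}^{(1)}<\alpha_{k,m}^{(1)}$.

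Finally I would apply the lower bound of \lemref{ulb} with $r_l=2$, which gives $\alpha_{l,m}^{(1)}-\pi/2>2\pi/(l-4m\pi)$. The required inequality therefore follows from $l+2\lambda_{l,m}+8>l-4m\pi$, i.e. $\lambda_{l,m}>-2m\pi-4$. This is exactly the left endpoint condition of $J_{l,m}$, made strict by irrationality.

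The main point needing care is the bookkeeping. One must confirm that the truncated case of the definition of $\lambda_{l,m}$ is excluded by the hypothesis, and that $r_k=0$, so that the constants in \eqref{bkma} are the ones used above. The analytic input is only the elementary lower bound $\mu_1$ from \lemref{ulb}, so I expect no real obstacle beyond these checks.
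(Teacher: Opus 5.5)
Your proof is correct and is essentially the paper's argument. It uses the same exclusion of the truncated case of $\lambda_{l,m}$, so that $\lambda_{l,m}=\lambda'_{l,m}>-2m\pi-4$ with $r_k=0$, together with \eqref{bkma}, the lower bound $\mu_1$ of \lemref{ulb}, and monotonicity of $b$. The only difference is a mirror image: the paper evaluates $b_{l,m}$ at $\alpha_{k,m}^{(1)}$ and uses the lower bound for $\alpha_{k,m}^{(1)}$ (with $r_k=0$), whereas you evaluate $b_{k,m}$ at $\alpha_{l,m}^{(1)}$ and use the lower bound for $\alpha_{l,m}^{(1)}$ (with $r_l=2$). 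Both versions reduce to the identical condition $l>2k-4m\pi$, i.e.\ $\lambda_{l,m}>-2m\pi-4$.
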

\begin{proof}

Now we consider
\begin{align*}
    &b_{l,m}(\alpha_{(l-2\lambda_{l,m}-8)/2,m}^{(1)})-b_{l,m}(\alpha_{l,m}^{(1)})\\
    &=b_{(l-2\lambda_{l,m}-8)/2,m}(\alpha_{(l-2\lambda_{l,m}-8)/2,m}^{(1)})+\frac{(l+2\lambda_{l,m}+8)\alpha_{(l-2\lambda_{l,m}-8)/2,m}}{4}-b_{l,m}(\alpha_{l,m}^{(1)})
    \end{align*}
    Using \eqref{bkma}, we have
    $$
   b_{l,m}(\alpha_{(l-2\lambda_{l,m}-8)/2,m}^{(1)})-b_{l,m}(\alpha_{l,m}^{(1)}) =\frac{l+2\lambda_{l,m}+8}{4}\left(\alpha_{(l-2\lambda_{l,m}-8)/2,m}^{(1)}-\frac{\pi}{2}\right)
-\frac{\pi}{2}.
$$
Now using \lemref{ulb}, we get
\begin{equation}\label{k<l/2}
b_{l,m}(\alpha_{(l-2\lambda_{l,m}-8)/2,m}^{(1)})-b_{l,m}(\alpha_{l,m}^{(1)})>\frac{l+2\lambda_{l,m}+8}{4}\left(\frac{2\pi}{l-2\lambda_{l,m}-8-8m\pi}\right)-\frac{\pi}{2}.
\end{equation}
Since $l/2-\lambda_{l,m}\ge54$, we have $\lambda_{l,m}=\lambda_{l,m}'\in J_{l,m}$, and $\lambda_{l,m}$ is an integer, we have
$$
\lambda_{l,m}>-2m\pi-4.
$$
Using this
$$
l+2\lambda_{l,m}+8>l-2\lambda_{l,m}-8-8m\pi.
$$
This gives us $b_{l,m}(\alpha_{(l-2\lambda_{l,m}-8)/2,m}^{(1)})>b_{l,m}(\alpha_{l,m}^{(1)})$ or equivalently,
$\alpha_{(l-2\lambda_{l,m}-8)/2,m}^{(1)}>\alpha_{l,m}^{(1)}$.
\end{proof}

\section{Proof of \thmref{main}}\label{pmain}
In this section, we prove \thmref{main} by proving the ordering of the zeros mentioned at the end of \S\ref{struc}.
Let $\delta_{k,m,i}=|\alpha_{k,m}^{(i)}-\tilde\alpha_{k,m}^{(i)}|$. If for some $1\le i\le m(l)-m$ and $1\le j\le m(k)-m$, we have $\alpha_{l,m}^{(i)}<\alpha_{k,m}^{(j)}$ (or $\alpha_{l,m}^{(i)}>\alpha_{k,m}^{(j)}$)  and $|\alpha_{k,m}^{(j)}-\alpha_{l,m}^{(i)}|<\delta_{k,m,j}+\delta_{l,m,i}$, then we have $\tilde\alpha_{l,m}^{(i)}<\tilde\alpha_{k,m}^{(j)}$ (or 
$\tilde\alpha_{l,m}^{(i)}>\tilde\alpha_{k,m}^{(j)}$), respectively. Therefore, to get the desired ordering among the zeros of 
$H_{k,m}(\theta)$ and $H_{l,m}(\theta)$, we need the following:
\begin{itemize}
\item An upper bound of $\delta_{k,m,i}$ for $i=1,2$.
\item The desired ordering among the corresponding zeros of $\cos b_{k,m}(\theta)$ and $\cos b_{l,m}(\theta)$ and a lower bound 
for their distance.
\end{itemize}

\subsection{Displacement calculation} The objective of this section is to compute  
upper bounds for $\delta_{k,m,1}$ and $\delta_{k,m,2}$. We start with the following two remarks.
\begin{rmk}\rm\label{eta}
Let $\kappa$ denote the minimum distance between two distinct zeros of $\cos b_{k,m}(\theta)$. Suppose $i\in\{1,2,\ldots, m(k)-m\}$ is such that
$\kappa={\alpha}_{k,m}^{(i+1)}-\alpha_{k,m}^{(i)}$. By the mean value theorem, there exists
$\xi_i\in I^{\circ}$ such that
$$
b_{k,m}'(\xi_i)=\frac{b_{k,m}({\alpha}_{k,m}^{(i+1)})-b_{k,m}(\alpha_{k,m}^{(i)})}{{\alpha}_{k,m}^{(i+1)}-\alpha_{k,m}^{(i)}}=\frac{\pi}{{\alpha}_{k,m}^{(i+1)}-\alpha_{k,m}^{(i)}}.
$$
Now for $\theta\in I^{\circ}$, we have 
\begin{equation}\label{bk'}
b_{k,m}'(\theta)\le \frac{k}{2}-2m\pi,
\end{equation}
and hence
$$
\kappa=\frac{\pi}{b_{k,m}'(\xi_i)}>\frac{2\pi}{(k-4m\pi)}.
$$
\end{rmk}

\begin{rmk}\rm\label{eta'}
Let $\kappa'=2\pi/3-\alpha_{k,m}^{(m(k)-m)}$ and $\kappa{''}=\alpha_{k,m}^{(1)}-\pi/2$. Using \eqref{bkma}, we have
$$
b_{k,m}({2\pi}/{3})-b_{k,m}(\alpha_{k,m}^{(m(k)-m)})\ge\frac{\pi}{2},\  \text{ and }
 \ b_{k,m}(\alpha_{k,m}^{(1)})-b_{k,m}({\pi}/{2})\ge\frac{\pi}{2}.$$ 
Thus, by using the mean value theorem as in \rmkref{eta}, we get
$$
\kappa{'}, \kappa''>\frac{\pi}{k-4m\pi}.
$$
\end{rmk}
In the following lemma, we derive an upper bound for $\delta_{k,m,1}$.
\begin{lem}\label{disp}
For $k\ge 50$ even and $m\le -1$, we have 
$$
\delta_{k,m,1}\le \frac{0.001\pi e^{0.87m\pi}}{k(k-2\sqrt3m\pi)}.
$$
\end{lem}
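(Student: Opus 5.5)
We bound the displacement by a standard perturbation argument on the decomposition \eqref{hkm}: $H_{k,m}(\theta)=2\cos b_{k,m}(\theta)+T_{k,m}(\theta)$. The idea is to show that $H_{k,m}$ changes sign on the interval $[\alpha_{k,m}^{(1)}-\delta,\alpha_{k,m}^{(1)}+\delta]$ with
$$\delta=\frac{0.001\pi e^{0.87m\pi}}{k(k-2\sqrt3 m\pi)}.$$
Since $\tilde\alpha_{k,m}^{(1)}$ is the zero of $H_{k,m}$ attached to $\alpha_{k,m}^{(1)}$, this gives $\delta_{k,m,1}\le\delta$. By \rmkref{eta} and \rmkref{eta'}, $\delta$ is far smaller than the distance from $\alpha_{k,m}^{(1)}$ to $\pi/2$ and to $\alpha_{k,m}^{(2)}$, so the zero found in this interval is indeed $\tilde\alpha_{k,m}^{(1)}$.

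\textbf{Step 1 (the cosine term).} At $\theta=\alpha_{k,m}^{(1)}\pm\delta$, the mean value theorem gives $|b_{k,m}(\theta)-b_{k,m}(\alpha_{k,m}^{(1)})|\ge \delta\min b_{k,m}'$. Here $b_{k,m}'(\theta)=k/2-2m\pi\sin\theta$, and on $I$ this is at least $k/2-\sqrt3 m\pi=(k-2\sqrt3m\pi)/2$, because $\sin\theta\ge\sqrt3/2$ and $m<0$. This is exactly where the factor $k-2\sqrt3m\pi$ in the statement comes from. Since $\delta$ is tiny, $|\sin x|\ge (2/\pi)|x|$ applies, so at the two endpoints $2\cos b_{k,m}$ has opposite signs and absolute value at least
$$2\cdot\frac{2}{\pi}\cdot\frac{k-2\sqrt3m\pi}{2}\,\delta=\frac{0.002\,e^{0.87m\pi}}{k}.$$

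\textbf{Step 2 (the error term).} Bound $|T_{k,m}|$ on $I$ by \eqref{qkm} and \eqref{rkm}, which together give $(1+0.359)e^{m\pi}2^{-k/2}$ for the $Q$ and $R$ parts. The $P_{k,m}$ term needs separate care: $P_{k,m}$ is increasing by fact (b), and near $\theta\approx\pi/2+O(1/k)$ it is of size roughly $e^{m\pi}2^{-k/2}$ times a bounded factor. I will bound it by its value at the right end of $I_1$ from \lemref{ulb}, i.e. $\theta\le1.68$. There $2\cos(\theta/2)\ge 2\cos(0.84)\approx1.33$ and $2\sin\theta-\tan(\theta/2)\ge 0.87$ or so, giving $|P_{k,m}|\le e^{0.87m\pi}(1.33)^{-k}$. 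This is presumably the source of the exponent $0.87$. Altogether $|T_{k,m}(\theta)|\le C e^{0.87m\pi}c^{-k}$ with $c>1.33$ and an explicit small $C$.

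\textbf{Step 3 (comparison) and the main obstacle.} It remains to check $Ce^{0.87m\pi}c^{-k}<0.002e^{0.87m\pi}/k$, i.e. $kc^{-k}<0.002/C$, for all even $k\ge50$. Since $kc^{-k}$ is decreasing in $k$ here, it suffices to check $k=50$. This leaves huge room because $(1.33)^{-50}\approx 10^{-6}$. The intermediate value theorem then produces a zero of $H_{k,m}$ within $\delta$ of $\alpha_{k,m}^{(1)}$.

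The main obstacle is Step 2. The terms $e^{m\pi}$ and $e^{1.039m\pi}$ are dominated by $e^{0.87m\pi}$ since $m<0$, so they are harmless. The real work is controlling $P_{k,m}$ uniformly in $m$: one must verify the exponent bound $2\sin\theta-\tan(\theta/2)\ge0.87$ on the relevant interval. That in turn requires confining $\theta$ to a neighbourhood of $\alpha_{k,m}^{(1)}$ inside $I_1$, using \lemref{ulb} together with \rmkref{1st z}.
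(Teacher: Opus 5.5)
Your proposal is correct and follows essentially the same route as the paper. Both arguments prove a sign change of $H_{k,m}$ at $\alpha_{k,m}^{(1)}\pm\delta$ in the same way: they bound $|2\cos b_{k,m}|$ from below using the mean value theorem and $b'_{k,m}\ge (k-2\sqrt3 m\pi)/2$, and they bound $|T_{k,m}|$ from above using the monotonicity of $P_{k,m}$ up to $\theta=1.68$ (where $2\sin\theta-\tan(\theta/2)\ge 0.87$) together with \eqref{qkm} and \eqref{rkm}. The only cosmetic difference is that you use $\sin x\ge 2x/\pi$, giving the lower bound $0.002e^{0.87m\pi}/k$, whereas the paper uses $\sin x\ge 3x/\pi$ on $[0,\pi/6]$, giving $0.003e^{0.87m\pi}/k$; both leave ample room at $k=50$.
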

\begin{proof}
We define $\gamma_{k,m,1}:= \frac{0.001\pi}{(k-2\sqrt3m\pi)(k-4m\pi)}$. Then in view of \rmkref{eta'}, we have
$$
\frac{\pi}{2}<\alpha_{k,m}^{(1)}\pm\gamma_{k,m,1}< \frac{2\pi}{3}.
$$
Also in view of \rmkref{eta}, $\cos b _{k,m}(\alpha_{k,m}^{(1)}+\gamma_{k,m,1})\cos b _{k,m}(\alpha_{k,m}^{(1)}-\gamma_{k,m,1})<0$.
Therefore, to prove the lemma, it is enough to prove that the sign of $H_{k,m}(\alpha_{k,m}^{(1)}\pm\gamma_{k,m,1})$ is the same as the sign of 
$\cos b_{k,m}(\alpha_{k,m}^{(1)}\pm\gamma_{k,m,1})$. To show this, in view of 
\eqref{hkm}, we need to prove that
$$
|2\cos b_{k,m}(\alpha_{k,m}^{(1)}\pm\gamma_{k,m,1})|>|T_{k,m}(\alpha_{k,m}^{(1)}\pm\gamma_{k,m,1})|.
$$
Following steps similar to \cite[\S3]{KS1}, we have 
$$
|2\cos b_{k,m}(\alpha_{k,m}^{(1)}\pm\gamma_{k,m,1})|>\frac{6}{\pi}\left(\frac{k\gamma_{k,m,1}}{2}-2\pi m\gamma_{k,m,1}\sin\xi\right),
$$
for some $\xi\in (\pi/2,2\pi/3)$. Using $-2\pi m\gamma_{k,m,1}\sin\xi\ge -\sqrt 3\pi m\gamma_{k,m,1}$, we have
$$
|2\cos b_{k,m}(\alpha_{k,m}^{(1)}\pm\gamma_{k,m,1})|>\frac{3\gamma_{k,m,1}}{\pi}(k-2\sqrt 3m\pi)=\frac{0.003 e^{0.87m\pi}}{k}.
$$
Since $P_{k,m}(\theta)$ is an increasing function, using \eqref{qkm}, \eqref{rkm}, we have
$$
|T_{k,m}(\alpha_{k,m}^{(1)}\pm\gamma_{k,m,1})|\le \frac{e^{m\pi(2\sin(\alpha_{k,m}^{(1)}+\gamma_{k,m,1})-\tan\frac {\alpha_{k,m}^{(1)}+\gamma_{k,m,1}}{2})}}{(2\cos\frac {\alpha_{k,m}^{(1)}+\gamma_{k,m,1}}{2})^k}+
\frac{e^{m\pi}}{2^{k/2}}+\frac{0.359e^{1.039m\pi}}{2^{k/2}}.
$$
Using \rmkref{1st z} for $k\ge 50$ even and $m\le -1$, we have
$$
\alpha_{k,m}^{(1)}+\gamma_{k,m,1}\le \min\{\alpha_{50,-1}^{(1)},\alpha_{52,-1}^{(1)}\}+\gamma_{k,m,1}\le 1.68.
$$
And hence 
$$
|T_{k,m}(\alpha_{k,m}^{(1)}\pm\gamma_{k,m,1})|\le 
\frac{e^{m\pi(2\sin1.68-\tan\frac {1.68}{2})}}{(2\cos\frac {1.68}{2})^k}+\frac{e^{m\pi}}{2^{k/2}}+
\frac{0.359e^{1.039m\pi}}{2^{k/2}}.
$$
Since $2\sin(1.68)-\tan {(0.84)}\ge 0.87$, for $k\ge 50$ even and $m\le -1$, it can be checked that 
$$
\frac{e^{m\pi(2\sin(1.68)-\tan (0.84))}}{(2\cos\frac {1.68}{2})^k}+\frac{e^{m\pi}}{2^{k/2}}+
\frac{0.359e^{1.039m\pi}}{2^{k/2}}<\frac{0.003e^{0.87m\pi}}{k}.
$$
This proves that
$$
H_{k,m}(\alpha_{k,m}^{(1)}+\gamma_{k,m,1})H_{k,m}(\alpha_{k,m}^{(1)}-\gamma_{k,m,1})<0.
$$
\end{proof}
In the following lemma, we compute an upper bound for $\delta_{k,m,2}$.
\begin{lem}\label{disp2}
For $k\ge 50$ even and $m\le -1$, we have 
$$
\delta_{k,m,2}\le\frac{0.1\pi}{k-2\sqrt3m\pi}.
$$
\end{lem}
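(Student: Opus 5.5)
We follow the template of \lemref{disp}, now with the second zero $\alpha_{k,m}^{(2)}$ and a larger window. Set
$$
\gamma_{k,m,2}:=\frac{0.1\pi}{k-2\sqrt3 m\pi}.
$$
The first task is to check that $\alpha_{k,m}^{(2)}\pm\gamma_{k,m,2}$ stays in $I^{\circ}$ and that $\cos b_{k,m}$ takes opposite signs at these two points. Since $k-2\sqrt3m\pi<k-4m\pi$, we only have $\gamma_{k,m,2}<0.1\pi/(k-4m\pi)\cdot\frac{k-4m\pi}{k-2\sqrt3 m\pi}$. The ratio $(k-4m\pi)/(k-2\sqrt3m\pi)$ is at most $4/(2\sqrt3)<1.16$, so $\gamma_{k,m,2}<0.116\pi/(k-4m\pi)$. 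By \rmkref{eta} and \rmkref{eta'}, this is smaller than half the gap to the neighbouring zeros of $\cos b_{k,m}$ and smaller than the distance to the endpoints of $I$. Hence $\alpha_{k,m}^{(2)}\pm\gamma_{k,m,2}\in I^{\circ}$, and the sign of $\cos b_{k,m}$ flips across $\alpha_{k,m}^{(2)}$ within the window. Thus it suffices to show $|2\cos b_{k,m}(\alpha_{k,m}^{(2)}\pm\gamma_{k,m,2})|>|T_{k,m}(\alpha_{k,m}^{(2)}\pm\gamma_{k,m,2})|$. Then $H_{k,m}$ changes sign in the window, and this zero must be $\tilde\alpha_{k,m}^{(2)}$ by Rankin's correspondence and the ordering \eqref{order}.

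For the lower bound on the cosine, we argue exactly as in \lemref{disp}. Since $|b_{k,m}(\alpha^{(2)}\pm\gamma)-b_{k,m}(\alpha^{(2)})|<\pi/2$ and $|\sin x|\ge 2|x|/\pi$ for $|x|\le\pi/2$, the mean value theorem with $b'_{k,m}(\xi)=k/2-2\pi m\sin\xi\ge (k-2\sqrt3m\pi)/2$ on $I$ gives
$$
|2\cos b_{k,m}(\alpha_{k,m}^{(2)}\pm\gamma_{k,m,2})|>\frac{2}{\pi}\,\gamma_{k,m,2}(k-2\sqrt3m\pi)=0.2 .
$$
We may use $4/\pi$ or $6/\pi$ as in \cite{KS1}, but the constant $0.2$ already suffices. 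This bound is now independent of $k,m$. That is the point of choosing the larger window $\gamma_{k,m,2}$ rather than the tiny $\gamma_{k,m,1}$.

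For the error term, $Q_{k,m}$ and the tail are bounded by \eqref{qkm} and \eqref{rkm}, which give at most $(1+0.359)e^{m\pi}2^{-k/2}\le 1.36e^{-\pi}2^{-25}$, a negligible quantity. The main obstacle is $P_{k,m}$ at the second zero, since $\alpha_{k,m}^{(2)}$ is farther from $\pi/2$ than $\alpha^{(1)}_{k,m}$. Using that $P_{k,m}$ is increasing, we need a uniform upper bound $\alpha_{k,m}^{(2)}+\gamma_{k,m,2}\le\theta_0$. By \rmkref{1st z}, $\alpha_{k,m}^{(2)}\le\max\{\alpha_{50,-1}^{(2)},\alpha_{52,-1}^{(2)}\}$. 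We bound these explicitly as in \lemref{ulb}, using $b_{k,m}(\alpha^{(2)})=k\pi/4+r_k\pi/4+3\pi/2$ together with the comparison line $A_{k,m}$. This gives something like $\alpha^{(2)}\lesssim \pi/2+3\pi/(50+3.96\pi)\approx1.72$, so we may take $\theta_0\approx1.75$. We still need to check $2\sin\theta_0-\tan(\theta_0/2)>0$ there; it is roughly $0.83$. Then
$$
P_{k,m}\le \frac{e^{m\pi(2\sin\theta_0-\tan(\theta_0/2))}}{(2\cos(\theta_0/2))^k}\le\frac{1}{(2\cos 0.875)^{50}} .
$$
Here $2\cos0.875\approx1.28$, so the bound is about $10^{-5.4}$, far below $0.2$. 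Summing the three pieces gives $|T_{k,m}|<0.2$, which establishes the sign change and hence $\delta_{k,m,2}\le\gamma_{k,m,2}$. The only delicate numerics are the uniform bound on $\alpha_{50,-1}^{(2)},\alpha_{52,-1}^{(2)}$ and keeping $\theta_0$ below the point where $\tan(\theta/2)$ overtakes $2\sin\theta$; even there, $m\le-1$ only helps once the exponent is positive.
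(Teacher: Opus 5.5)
Your route is the paper's: the same window $\gamma_{k,m,2}=0.1\pi/(k-2\sqrt3m\pi)$, and the same mean-value lower bound for $|2\cos b_{k,m}|$. Your constant $0.2$ comes from $|\sin x|\ge 2|x|/\pi$, while the paper gets $0.3$ from the $6/\pi$ constant; either suffices. You also treat $T_{k,m}$ the same way, via the monotonicity of $P_{k,m}$, \eqref{qkm}, \eqref{rkm} and \rmkref{1st z}.

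The gap is the numerical step you yourself flag as delicate: your uniform bound on $\alpha_{k,m}^{(2)}$ is wrong. Since $r_{50}=2$, \eqref{bkma} gives $b_{50,-1}(\alpha_{50,-1}^{(2)})-b_{50,-1}(\pi/2)=2\pi$, not $3\pi/2$. The linear comparison therefore produces $\pi/2+4\pi/(50+3.96\pi)\approx1.772$, not $\pi/2+3\pi/(50+3.96\pi)\approx1.722$. The factor $3$ belongs to $r_k=0$, i.e.\ to $k=52$, where one gets only about $1.717$. Directly, $u=\alpha_{50,-1}^{(2)}-\pi/2$ solves $25u+2\pi\sin u=2\pi$, so $\alpha_{50,-1}^{(2)}\approx1.772>1.75$. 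Hence your choice $\theta_0\approx1.75$ and the bound $P_{k,m}\le(2\cos0.875)^{-50}$ are not justified at $k=50$, $m=-1$.

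There is a second, smaller issue. The justification of $A_{k,m}\le b_{k,m}$ in \lemref{ulb}, via $2\sin\theta>1.98$, only covers $\theta<1.712$. At the new point you must either check $b_{k,m}(\mu)>A_{k,m}(\mu)$ directly, which holds because $2\sin u>1.98u$ for $u<0.24$, or use a smaller slope.

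The repair is straightforward and lands exactly on the paper's constant. With $\gamma_{50,-1,2}\approx0.0052$ you get $\alpha_{k,m}^{(2)}+\gamma_{k,m,2}\le1.78$. At $\theta_0=1.78$ one has $2\sin\theta_0-\tan(\theta_0/2)\approx0.72$ and $(2\cos0.89)^{-50}\approx10^{-5}$. So $|T_{k,m}|$ remains far below $0.2$, and your argument then goes through as in the paper.

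Your worry about where $\tan(\theta/2)$ overtakes $2\sin\theta$ is unnecessary. Indeed $2\sin\theta-\tan(\theta/2)=\tan(\theta/2)\bigl(4\cos^2(\theta/2)-1\bigr)>0$ on all of $[\pi/2,2\pi/3)$.
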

\begin{proof}
Let $\gamma_{k,m,2}:=\frac{0.1\pi}{k-2\sqrt3m\pi}$. An analysis similar to the proof of \lemref{disp}
gives us
$$
|2\cos b_{k,m}(\alpha_{k,m}^{(1)}\pm\gamma_{k,m,2})|>\frac{3\gamma_{k,m,2}}{\pi}(k-2\sqrt 3m\pi)={0.3}.
$$
In view of \eqref{hkm}, to prove that the sign of $H_{k,m}(\alpha_{k,m}^{(1)}\pm\gamma_{k,m,2})$ is same as the sign of $\cos b_{k,m}(\alpha_{k,m}^{(1)}\pm\gamma_{k,m,2})$, we need to show that
$$
|T_{k,m}(\alpha_{k,m}^{(2)}\pm\gamma_{k,m,2})|\le{0.3}.
$$
For $k\ge 50$ even and $m\le -1$, using \rmkref{1st z}, we have
$$
\alpha_{k,m}^{(2)}+\gamma_{k,m,2}\le \min\{\alpha_{50,-1}^{(2)},\alpha_{52,-1}^{(2)}\}+\gamma_{k,m,2}\le 1.78.
$$
Therefore for $k\ge 50$ even and $m\le -1$ we have 
$$
|T_{k,m}(\alpha_{k,m}^{(2)}\pm\gamma_{k,m,2})|\le 
\frac{e^{m\pi(2\sin1.78-\tan\frac {1.78}{2})}}{(2\cos\frac {1.78}{2})^k}+\frac{e^{m\pi}}{2^{k/2}}+
\frac{0.359e^{1.039m\pi}}{2^{k/2}}\le 0.3.
$$
This completes the proof.
\end{proof}

\subsection{Ordering of the zeros of $H_{k,m}(\theta)$ and $H_{l,m}(\theta)$}
The key purpose of this section
is to derive the desired ordering 
among the zeros of $\cos b_{k,m}(\theta)$ and $\cos b_{l,m}(\theta)$ first, and a lower bound for their distance. The distance computation between any two zeros of $\cos b_{k,m}(\theta)$ and $\cos b_{l,m}(\theta)$ in the case $(r_k,r_l)=(2,0)$ is a bit tricky and requires  $-2m\pi$ to be away from its nearest integers. Therefore, the condition 
on the fractional part of $-2m\pi$ is important in calculating the distance.
The desired ordering among the zeros of $H_{k,m}(\theta)$ and 
$H_{l,m}(\theta)$ can be obtained as a corollary to all these computations. We proceed by dividing into $3$ cases namely, $(r_k,r_l)=(2,0)$, $r_k=r_l$ and $(r_k,r_l)=(0,2)$.

\subsubsection{The case $(r_k,r_l)=(2,0)$} In this subsection we prove that for $l>k\ge50$ even integers such that 
$(r_k,r_l)=(2,0)$, we have $\tilde\alpha_{l,m}^{(1)}<\tilde\alpha_{k,m}^{(1)}$. For that, we establish the following lemma.
\begin{lem}\label{l(2,0)}
For $l>k\ge50$ even integers such that $(r_k,r_l)=(2,0)$, we have $\alpha_{l,m}^{(1)}<\alpha_{k,m}^{(1)}$. Moreover,
$$
\alpha_{k,m}^{(1)}-\alpha_{l,m}^{(1)}>\frac{\pi}{k-4m\pi}.
$$
\begin{proof}
In view of \eqref{bkm}, we have
$$
b_{k,m}(\alpha_{k,m}^{(1)})=\frac{k\pi}{4}+\pi,  \ \text{and} \ b_{l,m}(\alpha_{l,m}^{(1)})=\frac{l\pi}{4}+\frac{\pi}{2}.
$$
We consider the following
\begin{align*}
b_{k,m}(\alpha_{k,m}^{(1)})-b_{k,m}(\alpha_{l,m}^{(1)})&=b_{k,m}(\alpha_{k,m}^{(1)})-b_{l,m}(\alpha_{l,m}^{(1)})+\frac{(l-k)\alpha_{l,m}^{(1)}}{2}\\
&=\frac{k\pi}{4}+\pi-\frac{l\pi}{4}-\frac{\pi}{2}+\frac{(l-k)\alpha_{l,m}^{(1)}}{2}\\
&=\frac{\pi}{2}+\frac{(l-k)}{2}\left(\alpha_{l,m}^{(1)}-\frac{\pi}{2}\right)>\frac{\pi}{2}.
\end{align*}
Since $b_{k,m}(\theta)$ is an increasing function, we have $\alpha_{k,m}^{(1)}>\alpha_{l,m}^{(1)}$. 
Moreover by using the mean value theorem for some $\xi\in (\pi/2,2\pi/3)$, we have
$$
\alpha_{k,m}^{(1)}-\alpha_{l,m}^{(1)}>\frac{\pi}{2b_{k,m}'(\xi)}>\frac{\pi}{k-4m\pi}.
$$
\end{proof}
\end{lem}

\begin{cor}\label{c2,0}
For $l>k\ge50$ even integers such that $(r_k,r_l)=(2,0)$, we have $\tilde\alpha_{l,m}^{(1)}<\tilde\alpha_{k,m}^{(1)}$.
\end{cor}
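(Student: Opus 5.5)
The plan is to transfer the ordering $\alpha_{l,m}^{(1)}<\alpha_{k,m}^{(1)}$ of Lemma~\ref{l(2,0)} to the true zeros $\tilde\alpha_{l,m}^{(1)}$ and $\tilde\alpha_{k,m}^{(1)}$. We already know that $\tilde\alpha_{k,m}^{(1)}$ lies within $\delta_{k,m,1}$ of $\alpha_{k,m}^{(1)}$, and likewise for $l$. So it suffices to show that the displacements cannot close the gap, namely
$$
\delta_{k,m,1}+\delta_{l,m,1}<\alpha_{k,m}^{(1)}-\alpha_{l,m}^{(1)}.
$$
Given this, we get
$$
\tilde\alpha_{l,m}^{(1)}\le \alpha_{l,m}^{(1)}+\delta_{l,m,1}<\alpha_{k,m}^{(1)}-\delta_{k,m,1}\le\tilde\alpha_{k,m}^{(1)}.
$$

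For the left-hand side, apply Lemma~\ref{disp} to both $k$ and $l$. Since $l>k\ge 50$ and $m\le -1$, each displacement is bounded by the one for $k$:
$$
\delta_{l,m,1}\le\frac{0.001\pi e^{0.87m\pi}}{l(l-2\sqrt3m\pi)}\le \frac{0.001\pi e^{0.87m\pi}}{k(k-2\sqrt3m\pi)}.
$$
Hence
$$
\delta_{k,m,1}+\delta_{l,m,1}\le\frac{0.002\pi e^{0.87m\pi}}{k(k-2\sqrt3m\pi)}.
$$

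For the right-hand side, Lemma~\ref{l(2,0)} gives the gap lower bound $\pi/(k-4m\pi)$. It remains to compare the two. We have $e^{0.87m\pi}<1$. Also $k\ge 50$ and $k-2\sqrt3 m\pi\ge k-4m\pi\cdot(\sqrt3/2)$. Since $-2\sqrt3 m\pi$ and $-4m\pi$ are comparable, this gives $(k-4m\pi)/(k-2\sqrt3m\pi)\le 2/\sqrt3$. Therefore
$$
\frac{0.002\pi}{k(k-2\sqrt3m\pi)}\le \frac{0.002\cdot(2/\sqrt3)}{50}\cdot\frac{\pi}{k-4m\pi}<\frac{\pi}{k-4m\pi},
$$
with an enormous margin.

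I expect no real obstacle. The only point needing care is that Lemma~\ref{disp} bounds the location of the zero of $H$ in the correct sense. Its proof shows that $H_{k,m}$ changes sign on $(\alpha_{k,m}^{(1)}-\gamma,\alpha_{k,m}^{(1)}+\gamma)$. We need this sign change to be the zero labelled $\tilde\alpha_{k,m}^{(1)}$ in \eqref{order}. This holds because the neighbourhoods of distinct $\alpha_{k,m}^{(i)}$ are disjoint, by Remark~\ref{eta}, and because Rankin's correspondence assigns the $i$-th zero to the $i$-th neighbourhood.
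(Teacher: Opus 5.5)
Your proposal is correct and follows the paper's proof: the paper also derives the corollary from the displacement bound of Lemma~\ref{disp} and the gap bound $\pi/(k-4m\pi)$ of Lemma~\ref{l(2,0)}, via $\delta_{k,m,1}+\delta_{l,m,1}<\alpha_{k,m}^{(1)}-\alpha_{l,m}^{(1)}$. Your version spells out the numerical comparison that the paper leaves to the reader, namely using $l>k$, $e^{0.87m\pi}<1$ and $(k-4m\pi)/(k-2\sqrt3m\pi)\le 2/\sqrt3$.
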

\begin{proof}
Using \lemref{disp} and \lemref{l(2,0)}, we have $\delta_{k,m,1}+\delta_{l,m,1}<\alpha_{k,m}^{(1)}-\alpha_{l,m}^{(1)}$, and therefore $\tilde\alpha_{l,m}^{(1)}<\tilde\alpha_{k,m}^{(1)}$.
\end{proof}

\subsubsection{The case $r_k=r_l$} In this subsection we prove that 
for $l>k\ge 50$ even integers such that $r_k=r_l$, we have $\tilde\alpha_{l,m}^{(1)}<\tilde\alpha_{k,m}^{(1)}$.
For that, we first prove the following lemma.

\begin{lem}\label{lequal}
For $l>k\ge50$ even integers such that $r_k=r_l$, we have $\alpha_{l,m}^{(1)}<\alpha_{k,m}^{(1)}$. Moreover,
$$
\alpha_{k,m}^{(1)}-\alpha_{l,m}^{(1)}>\frac{(l-k)\pi}{2(k-4m\pi)}.
$$
\end{lem}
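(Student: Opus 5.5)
The plan is to follow the proof of \lemref{1st zero} with $m'=m$, and then turn the resulting gap in $b_{k,m}$-values into a gap in angles by the mean value theorem, as in \lemref{l(2,0)}.

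\emph{Step 1 (difference identity).} Since $r_k=r_l$, \eqref{bkm} gives
$$
b_{k,m}(\alpha_{k,m}^{(1)})-b_{l,m}(\alpha_{l,m}^{(1)})=\frac{(k-l)\pi}{4}.
$$
Because $b_{k,m}(\theta)=b_{l,m}(\theta)-(l-k)\theta/2$, this yields
$$
b_{k,m}(\alpha_{k,m}^{(1)})-b_{k,m}(\alpha_{l,m}^{(1)})=\frac{l-k}{2}\left(\alpha_{l,m}^{(1)}-\frac{\pi}{2}\right).
$$
The right-hand side is positive, since $\alpha_{l,m}^{(1)}\in I^{\circ}$. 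As $b_{k,m}$ is increasing on $I$, this gives $\alpha_{l,m}^{(1)}<\alpha_{k,m}^{(1)}$, which is the first assertion. It is also the special case $m'=m$ of \lemref{1st zero}.

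\emph{Step 2 (mean value theorem).} There is some $\xi\in(\alpha_{l,m}^{(1)},\alpha_{k,m}^{(1)})\subset I^{\circ}$ with
$$
\alpha_{k,m}^{(1)}-\alpha_{l,m}^{(1)}=\frac{(l-k)\left(\alpha_{l,m}^{(1)}-\frac{\pi}{2}\right)}{2\,b_{k,m}'(\xi)}.
$$
By \eqref{bk'} we have $2b_{k,m}'(\xi)\le k-4m\pi$, so
$$
\alpha_{k,m}^{(1)}-\alpha_{l,m}^{(1)}>\frac{(l-k)\left(\alpha_{l,m}^{(1)}-\frac{\pi}{2}\right)}{k-4m\pi}.
$$

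\emph{Step 3 (the factor $\alpha_{l,m}^{(1)}-\pi/2$).} To reach the stated bound $\frac{(l-k)\pi}{2(k-4m\pi)}$, Step 2 requires $\alpha_{l,m}^{(1)}-\pi/2\ge\pi/2$. This is the step I expect to be the main obstacle, and I do not see how to carry it out. The sources I can cite do not deliver it: the lower bound in \lemref{ulb} gives only
$$
\alpha_{l,m}^{(1)}-\frac{\pi}{2}>\frac{\pi(r_l/2+1)}{l-4m\pi},
$$
and \rmkref{eta'} gives only $\kappa''>\pi/(l-4m\pi)$.

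More seriously, the requirement cannot hold. We have $\alpha_{l,m}^{(1)}<2\pi/3$, so $\alpha_{l,m}^{(1)}-\pi/2<\pi/6$. The stated bound also exceeds the length $\pi/6$ of $I$ once $l-k$ is large compared with $k-4m\pi$, while $\alpha_{k,m}^{(1)}-\alpha_{l,m}^{(1)}<\pi/6$ always. So the inequality as literally worded cannot hold for all admissible $l>k$. I therefore do not expect to complete a proof of the stated bound.

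What Steps 1--3 do prove is the ordering together with the gap
$$
\alpha_{k,m}^{(1)}-\alpha_{l,m}^{(1)}>\frac{(l-k)\pi(r_l/2+1)}{(k-4m\pi)(l-4m\pi)},
$$
obtained by inserting the lower bound from \lemref{ulb} into Step 2. Before relying on the stated form downstream, I would check whether this bound exceeds $\delta_{k,m,1}+\delta_{l,m,1}$ from \lemref{disp}. If it does, the analogue of \corref{c2,0} for the case $r_k=r_l$ follows without the stronger claim.
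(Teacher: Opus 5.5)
Your proof is correct and coincides with the paper's. Both use the identity $b_{k,m}(\alpha_{k,m}^{(1)})-b_{k,m}(\alpha_{l,m}^{(1)})=\frac{l-k}{2}\bigl(\alpha_{l,m}^{(1)}-\frac{\pi}{2}\bigr)$, then the lower bound of \lemref{ulb}, then the mean value theorem with \eqref{bk'}. Your diagnosis of Step~3 is also right. The paper's own proof only reaches $\alpha_{k,m}^{(1)}-\alpha_{l,m}^{(1)}>\frac{(l-k)\pi}{(l-4m\pi)(k-4m\pi)}$, which is your bound with the factor $r_l/2+1\ge 1$ dropped, and that is exactly the inequality used in \corref{cequal}; the displayed bound in the statement is a misprint.

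The displayed bound fails far more broadly than for large $l-k$. When $r_k=r_l=0$, the upper bound in \lemref{ulb} gives $\alpha_{k,m}^{(1)}-\alpha_{l,m}^{(1)}<\alpha_{k,m}^{(1)}-\frac{\pi}{2}<\frac{\pi}{k-3.96m\pi}<\frac{2\pi}{k-4m\pi}\le\frac{(l-k)\pi}{2(k-4m\pi)}$ for every admissible $k,l,m$.
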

\begin{proof}
We consider the following:
\begin{align*}
b_{k,m}(\alpha_{k,m}^{(1)})-b_{k,m}(\alpha_{l,m}^{(1)})&=b_{k,m}(\alpha_{k,m}^{(1)})-b_{l,m}(\alpha_{l,m}^{(1)})+\frac{(l-k)\alpha_{l,m}^{(1)}}{2}\\
&=\frac{k\pi}{4}-\frac{l\pi}{4}+\frac{(l-k)\alpha_{l,m}^{(1)}}{2}\\
&=\frac{(l-k)}{2}\left(\alpha_{l,m}^{(1)}-\frac{\pi}{2}\right).
\end{align*}
Now, by using \lemref{ulb}, we get
$$
b_{k,m}(\alpha_{k,m}^{(1)})-b_{k,m}(\alpha_{l,m}^{(1)})>\frac{(l-k)\pi}{2(l-4m\pi)}.
$$
Therefore, by using the mean value theorem, we get
$$
\alpha_{k,m}^{(1)}-\alpha_{l,m}^{(1)}>\frac{(l-k)\pi}{(l-4m\pi)(k-4m\pi)}.
$$
\end{proof}

\begin{cor}\label{cequal}
For $l>k\ge 50$ even integers such that $r_k=r_l$, we have $\tilde\alpha_{l,m}^{(1)}<\tilde\alpha_{k,m}^{(1)}$.
\end{cor}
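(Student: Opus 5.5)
The argument mirrors \corref{c2,0}. By the criterion at the start of \S\ref{pmain}, it suffices to show
$$
\delta_{k,m,1}+\delta_{l,m,1}<\alpha_{k,m}^{(1)}-\alpha_{l,m}^{(1)}.
$$
If this holds, each true zero $\tilde\alpha_{\cdot,m}^{(1)}$ lies within its displacement of the corresponding zero of the cosine. The intervals $[\alpha_{l,m}^{(1)}-\delta_{l,m,1},\alpha_{l,m}^{(1)}+\delta_{l,m,1}]$ and $[\alpha_{k,m}^{(1)}-\delta_{k,m,1},\alpha_{k,m}^{(1)}+\delta_{k,m,1}]$ are then disjoint and ordered as the centres are. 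Since \lemref{lequal} gives $\alpha_{l,m}^{(1)}<\alpha_{k,m}^{(1)}$, we get $\tilde\alpha_{l,m}^{(1)}<\tilde\alpha_{k,m}^{(1)}$.

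For the right-hand side I use the last display in the proof of \lemref{lequal}, namely
$$
\alpha_{k,m}^{(1)}-\alpha_{l,m}^{(1)}>\frac{(l-k)\pi}{(l-4m\pi)(k-4m\pi)}.
$$
This form is safer than the one printed in the lemma's statement. Because $l-k\ge 4$ when $r_k=r_l$, it is at least $4\pi/((l-4m\pi)(k-4m\pi))$.

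For the left-hand side I bound $\delta_{k,m,1}$ and $\delta_{l,m,1}$ with \lemref{disp}. Since $e^{0.87m\pi}\le e^{-0.87\pi}<1$ and $k<l$, both are at most
$$
\frac{0.001\pi}{k(k-2\sqrt3 m\pi)},
$$
so the sum is at most $0.002\pi/(k(k-2\sqrt3m\pi))$.

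The main step is comparing these quantities, because the gap bound has $l$ in its denominator and $l$ can be arbitrarily larger than $k$. I handle this by using the factor $l-k$. The function
$$
\frac{l-k}{l-4m\pi}
$$
is increasing in $l$, so with $l\ge k+4$ it is at least $4/(k+4-4m\pi)$. It then suffices to check
$$
\frac{4\pi}{(k+4-4m\pi)(k-4m\pi)}>\frac{0.002\pi}{k(k-2\sqrt3m\pi)}.
$$
Using $k+4-4m\pi\le \tfrac{54}{50}(k-4m\pi)$ and $k-4m\pi\le \tfrac{4}{2\sqrt3}(k-2\sqrt3 m\pi)$, the left side is at least a constant times $1/(k(k-2\sqrt3m\pi))$ whenever $-m\pi$ is not huge relative to $k$. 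That constant is far larger than $0.002$.

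The factor $e^{0.87m\pi}$ in \lemref{disp} covers the regime where $-m$ is large compared with $k$. In that regime $e^{0.87m\pi}$ decays exponentially while $(k-4m\pi)^2/(k(k-2\sqrt3m\pi))$ grows only polynomially in $-m$. So the inequality holds for all even $k\ge 50$ and all $m\le -1$, and I would conclude with this elementary estimate.
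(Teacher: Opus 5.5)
Your proposal is correct. Its skeleton matches the paper's: you reduce to $\delta_{k,m,1}+\delta_{l,m,1}<\alpha_{k,m}^{(1)}-\alpha_{l,m}^{(1)}$, use the bound from the body of the proof of \lemref{lequal}, and bound the displacements via \lemref{disp}. You also rightly read the criterion at the start of \S\ref{pmain} with the inequality reversed, and rightly avoided the misprinted bound in the statement of \lemref{lequal}.

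The difference lies in the final comparison. The paper splits into two cases, each closed by its own numerical check:
\begin{itemize}
\item for $2k>l$, it bounds both displacements by $0.005\pi e^{0.87m\pi}/(l(l-2\sqrt3m\pi))$ and the gap from below by $4\pi/(l-4m\pi)^2$;
\item for $2k\le l$, it uses $l-k\ge l/2$ and $l/(l-4m\pi)>1/(1-4m\pi)$.
\end{itemize}
You instead note that $(l-k)/(l-4m\pi)$ is increasing in $l$, so the extremal case is $l=k+4$, and you bound both displacements by the $k$-term. This leaves a single inequality in $(k,m)$ with no case split, which is cleaner; the paper's route avoids the monotonicity observation but pays with two separate checks.

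The one soft spot is your closing ``regime'' argument, which you leave unquantified. No split is actually needed. Your two comparisons, $k+4-4m\pi\le\frac{54}{50}(k-4m\pi)$ and $k-4m\pi\le\frac{2}{\sqrt3}(k-2\sqrt3m\pi)$, together with $k\ge 50$, give
$$
e^{0.87m\pi}\frac{(k+4-4m\pi)(k-4m\pi)}{k(k-2\sqrt3m\pi)}\le\frac{54}{50}\cdot\frac{2}{\sqrt3}\left(1-\frac{4m\pi}{50}\right)e^{0.87m\pi}<1.25 .
$$
The last step holds because $1-0.08m\pi\le e^{-0.08m\pi}\le e^{-0.87m\pi}$. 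Since the required threshold is $2000$, this settles the inequality uniformly in all $k\ge50$ and $m\le-1$.
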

\begin{proof}
In view of \lemref{disp} and \lemref{lequal}, it is sufficient to prove that
$$
\frac{(l-k)\pi}{(l-4m\pi)(k-4m\pi)}>\frac{0.001\pi e^{0.87m\pi}}{k(k-2\sqrt3m\pi)}+\frac{0.001\pi e^{0.87m\pi}}{l(l-2\sqrt3m\pi)}.
$$
As $r_k=r_l$, we have $l-k\ge 4$.
Now for $2k>l$, we have 
$$2(k-2\sqrt3m\pi)=2k-4\sqrt3m\pi>l-2\sqrt3m\pi.$$ and therefore,
$$
\frac{4}{l(l-2\sqrt3m\pi)}>\frac{1}{k(k-2\sqrt3m\pi)}.
$$
Hence for $2k>l$,
$$
\frac{0.001\pi e^{0.87m\pi}}{k(k-2\sqrt3m\pi)}+\frac{0.001\pi e^{0.87m\pi}}{l(l-2\sqrt3m\pi)}\le\frac{0.005\pi e^{0.87m\pi}}{l(l-2\sqrt3m\pi)}.
$$
Now for $l\ge 50$ and $m\le-1$, we have
$$
\frac{\pi(l-k)}{(l-4m\pi)(k-4m\pi)}>\frac{4\pi}{(l-4m\pi)^2}>\frac{0.005\pi e^{0.87m\pi}}{l(l-2\sqrt3m\pi)}.
$$
For $2k\le l$, we have  $l-k\ge{l}/{2}$, and therefore we have
$$
\frac{\pi(l-k)}{(l-4m\pi)(k-4m\pi)}\ge\frac{ l\pi}{2(l-4m\pi)(k-4m\pi)}> \frac{\pi}{2(k-4m\pi)(1-4m\pi)}.
$$
Also for $2k\le l$,
$$
\frac{1}{l(l-2\sqrt3m\pi)}<\frac{1}{k(k-2\sqrt3m\pi)}.
$$
Now for $k\ge 50$, and $m\le-1$, we have
$$
\frac{\pi}{2(k-4m\pi)(1-4m\pi)}>\frac{0.002\pi e^{0.87m\pi}}{k(k-2\sqrt3m\pi)}.
$$
This completes the proof.
\end{proof}

\subsubsection{The case $(r_k,r_l)=(2,0)$}
For any $50\le k<l$ even integers such that $(r_k,r_l)=(0,2)$, using \lemref{k<lam}, \lemref{k>lam} and \lemref{l(2,0)}, we have
\begin{align*}
&\alpha_{l,m}^{(1)}<\alpha_{(l-2\lambda_{l,m}-8)/2,m}^{(1)}\le\alpha_{k,m}^{(1)} \ \ \text{for}\ \
k\le (l-2\lambda_{l,m}-8)/2, \\
&\text{and}\ \alpha_{k,m}^{(1)}\le \alpha_{(l-2\lambda_{l,m})/2,m}^{(1)}<\alpha_{l,1}^{(1)}
\ \ \text{for}\ \ k\ge (l-2\lambda_{l,m})/2.
\end{align*}
Therefore, in view of \corref{c2,0}, to get the same ordering among the zeros of 
$H_{k,m}(\theta)$ and $H_{l,m}(\theta)$, we only need to prove the following:
$$
    \tilde\alpha_{(l-2\lambda_{l,m})/2,m}^{(1)}<\tilde\alpha_{l,m}^{(1)}, \ \ \text{for}\ \ l/2-\lambda_{l,m}\ge 50,\ \  \text{and}
    $$
    $$
\tilde\alpha_{(l-2\lambda_{l,m}-8)/2,m}^{(1)}>\tilde\alpha_{l,m}^{(1)},    \ \ \text{for}\ \ l/2-\lambda_{l,m}-4\ge50.
$$

\begin{lem}\label{lk<l/2}
Let $l\ge50$ be an even integer such that $r_l=2$. For an integer $m\le -1$ such that
$l/2-\lambda_{l,m}-4\ge 50$, we have
$$
\alpha_{(l-2\lambda_{l,m}-8)/2,m}^{(1)}-\alpha_{l,m}^{(1)}>\frac{0.04\pi}{(l-4m\pi-0.02)(l-4m\pi)}.
$$
\end{lem}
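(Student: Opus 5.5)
We argue as in the proof of \lemref{k>lam}, but keep track of the size of the positive quantity there instead of only its sign.

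\textbf{Setup.} Write $k':=(l-2\lambda_{l,m}-8)/2$. The hypothesis $l/2-\lambda_{l,m}-4\ge 50$ gives $l/2-\lambda_{l,m}\ge 54\ge 52$. So by definition $\lambda_{l,m}=\lambda'_{l,m}\in J_{l,m}$, and $l/2-\lambda_{l,m}\equiv 0\pmod 4$. Hence $k'\equiv 0 \pmod 4$, i.e.\ $r_{k'}=0$, and $k'\ge 50$.

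\textbf{Step 1: the identity.} Using \eqref{bkma} for $k'$ (with $r_{k'}=0$) and for $l$ (with $r_l=2$), exactly as in \lemref{k>lam},
$$
b_{l,m}(\alpha_{k',m}^{(1)})-b_{l,m}(\alpha_{l,m}^{(1)})=\frac{l+2\lambda_{l,m}+8}{4}\left(\alpha_{k',m}^{(1)}-\frac{\pi}{2}\right)-\frac{\pi}{2}.
$$

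\textbf{Step 2: the lower bound from \lemref{ulb}.} With $r_{k'}=0$, \lemref{ulb} gives
$$
\alpha_{k',m}^{(1)}-\frac{\pi}{2}>\frac{\pi}{k'-4m\pi}=\frac{2\pi}{l-2\lambda_{l,m}-8-8m\pi}.
$$
Substituting into Step 1,
$$
b_{l,m}(\alpha_{k',m}^{(1)})-b_{l,m}(\alpha_{l,m}^{(1)})>\frac{\pi}{2}\cdot\frac{4\lambda_{l,m}+16+8m\pi}{l-2\lambda_{l,m}-8-8m\pi}.
$$

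\textbf{Step 3: using the fractional-part condition (the key step).} Here $\lambda_{l,m}>-2m\pi-4$ alone is not enough; we need a quantitative gap. Write $-2m\pi-4=N+f$ with $N\in\Z$ and $f=\{-2m\pi\}\le 0.99$. Since $\lambda_{l,m}$ is an integer strictly larger than $N+f$, we have $\lambda_{l,m}\ge N+1$. Therefore
$$
\lambda_{l,m}+4+2m\pi\ge 1-f\ge 0.01 .
$$
This gives
$$
4\lambda_{l,m}+16+8m\pi\ge 0.04
\quad\text{and}\quad
l-2\lambda_{l,m}-8-8m\pi\le l-4m\pi-0.02 ,
$$
the second because $-2\lambda_{l,m}\le 4m\pi+8-0.02$. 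Since the denominator is positive (it equals $2(k'-4m\pi)$), we conclude
$$
b_{l,m}(\alpha_{k',m}^{(1)})-b_{l,m}(\alpha_{l,m}^{(1)})>\frac{0.02\pi}{l-4m\pi-0.02}.
$$

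\textbf{Step 4: the mean value theorem.} By the mean value theorem there is $\xi\in I^{\circ}$ with
$$
b_{l,m}(\alpha_{k',m}^{(1)})-b_{l,m}(\alpha_{l,m}^{(1)})=b'_{l,m}(\xi)\left(\alpha_{k',m}^{(1)}-\alpha_{l,m}^{(1)}\right).
$$
By \eqref{bk'}, $b'_{l,m}(\xi)\le l/2-2m\pi$. Dividing the bound of Step 3 by $(l-4m\pi)/2$ yields
$$
\alpha_{k',m}^{(1)}-\alpha_{l,m}^{(1)}>\frac{0.04\pi}{(l-4m\pi-0.02)(l-4m\pi)},
$$
which is the claim.

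The main obstacle is Step 3: it is the only place where the upper condition $\{-2m\pi\}\le 0.99$ enters. It is what turns the strict inequality $\lambda_{l,m}>-2m\pi-4$ into the explicit margin $0.01$, and that margin is the source of the constants $0.04$ and $0.02$ in the statement.
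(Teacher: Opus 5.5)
Your proof is correct and follows the paper's argument step for step. The common steps are the identity behind \eqref{k<l/2}, the integer margin $\lambda_{l,m}\ge -2m\pi-4+0.01$ coming from $\{-2m\pi\}\le 0.99$, and the mean value theorem with $b_{l,m}'\le l/2-2m\pi$. The only difference is cosmetic: you bound the numerator $4\lambda_{l,m}+16+8m\pi$ and the denominator separately, whereas the paper bounds the ratio $\frac{l+2\lambda_{l,m}+8}{l-2\lambda_{l,m}-8-8m\pi}$ below by $1+\frac{0.04}{l-4m\pi-0.02}$, which is the same computation.
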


\begin{proof}
In view of \eqref{k<l/2}, we have
\begin{equation*}
b_{l,m}(\alpha_{(l-2\lambda_{l,m}-8)/2,m}^{(1)})-b_{l,m}(\alpha_{l,m}^{(1)})>\frac{l+2\lambda_{l,m}+8}{4}\left(\frac{2\pi}{l-2\lambda_{l,m}-8-8m\pi}\right)-\frac{\pi}{2}.
\end{equation*}
As $l/2-\lambda_{l,m}\ge 54$, so $\lambda_{l,m}=\lambda_{l,m}'\in J_{l,m}$, using $\{-2m\pi\}\le 0.99$, we get that
$$
\lambda_{l,m}\ge -2m\pi-4+0.01.
$$
And hence
$$
\frac{l+2\lambda_{l,m}+8}{l-2\lambda_{l,m}-8-8m\pi}\ge \frac{l-4m\pi+0.02}{l-4m\pi-0.02}=1+\frac{0.04}{l-4m\pi-0.02}.
$$
Therefore,
$$
b_{l,m}(\alpha_{(l-2\lambda_{l,m}-8)/2,m}^{(1)})-b_{l,m}(\alpha_{l,m}^{(1)})>\frac{0.02\pi}{l-4m\pi-0.02}.
$$
Since $\alpha_{(l-2\lambda_{l,m}-8)/2,m}^{(1)}-\alpha_{l,m}^{(1)}\neq0$, by using the mean value theorem we
have
$$
\alpha_{(l-2\lambda_{l,m}-8)/2,m}^{(1)}-\alpha_{l,m}^{(1)}>\frac{0.04\pi}{(l-4m\pi-0.02)(l-4m\pi)}>\frac{0.04\pi}{(l-4m\pi)^2}.
$$
\end{proof}

\begin{cor}\label{ck<l/2}
Let $l\ge50$ be an even integer such that $r_l=2$. For an integer $m\le -1$ such that
$l/2-\lambda_{l,m}-4\ge 50$ we have
$\tilde\alpha_{(l-2\lambda_{l,m}-8)/2,m}^{(1)}>\tilde\alpha_{l,m}^{(1)}$.
\end{cor}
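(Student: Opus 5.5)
Set $k:=(l-2\lambda_{l,m}-8)/2$. Since $l/2-\lambda_{l,m}-4\ge 50$, \lemref{disp} applies to both $k$ and $l$. By the criterion at the beginning of \S\ref{pmain}, it is enough to show
$$
\delta_{k,m,1}+\delta_{l,m,1}<\alpha_{k,m}^{(1)}-\alpha_{l,m}^{(1)}.
$$
The left side is controlled by \lemref{disp} and the right side by \lemref{lk<l/2}.

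First I bound the left side crudely. Since $k\ge 50$ and $l>k$, \lemref{disp} gives
$$
\delta_{k,m,1}+\delta_{l,m,1}\le \frac{0.002\pi e^{0.87m\pi}}{k(k-2\sqrt3 m\pi)}.
$$
\lemref{lk<l/2} gives the lower bound $0.04\pi/(l-4m\pi)^2$ for the right side. So the claim reduces to
$$
0.02\,e^{-0.87m\pi}\,k(k-2\sqrt3m\pi)>(l-4m\pi)^2 .
$$

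Next I express $l$ through $k$. Because $\lambda_{l,m}=\lambda'_{l,m}\in J_{l,m}$ in this range, we have $-2m\pi-4<\lambda_{l,m}\le -2m\pi$. Hence
$$
l=2k+2\lambda_{l,m}+8\le 2k-4m\pi+8,
$$
and therefore $l-4m\pi\le 2k-8m\pi+8\le 2(k-4m\pi+4)$. It then suffices to show
$$
0.005\,e^{-0.87m\pi}\,k(k-2\sqrt3 m\pi)>(k-4m\pi+4)^2
$$
for all $k\ge 50$ and $m\le -1$.

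This last inequality is the main obstacle. The factor $e^{-0.87m\pi}\ge e^{0.87\pi}\approx 15.4$ grows exponentially in $|m|$, but $k$ is unbounded, so the ratio $(k-4m\pi+4)^2/\bigl(k(k-2\sqrt3m\pi)\bigr)$ must stay bounded.
\begin{itemize}
\item As $k\to\infty$ this ratio tends to $1$.
\item Writing $x=-m\pi$, it is at most $(1+(4x+4)/k)^2$, which is bounded by a polynomial in $x$ when $k\ge 50$.
\end{itemize}
With only $0.005\cdot 15.4\approx 0.077$ available at $m=-1$, the inequality fails. The crude estimate coming from \lemref{lk<l/2} is therefore too lossy, and I expect this to be the real difficulty.

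To repair it I would redo the estimate in \lemref{lk<l/2} with the actual gap. The quantity $l+2\lambda_{l,m}+8-(l-2\lambda_{l,m}-8-8m\pi)=4\lambda_{l,m}+16+8m\pi$ is at least $4(\{-2m\pi\}-\text{const})$-type margin times the length scale, not merely $0.04$. Combined with the mean value theorem using $b'_{l,m}\le (l-4m\pi)/2$, this should give a separation of order $1/(l-4m\pi)$ rather than $1/(l-4m\pi)^2$. Against $\delta=O(e^{0.87m\pi}/k^2)$ the inequality then holds comfortably, since $l\le 2k-4m\pi+8$ and the exponential factor absorbs the polynomial loss in $|m|$. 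The small cases (say $m=-1$ with $k$ near $50$) I would finish by direct numerical verification. Once the separation exceeds $\delta_{k,m,1}+\delta_{l,m,1}$, the ordering $\alpha_{l,m}^{(1)}<\alpha_{k,m}^{(1)}$ transfers to $\tilde\alpha_{l,m}^{(1)}<\tilde\alpha_{k,m}^{(1)}$, which is the claim.
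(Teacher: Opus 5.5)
There is a gap, but it comes from an arithmetic slip: your first route is essentially the paper's argument and it works, and you abandoned it only because of that slip. From $\delta_{k,m,1}+\delta_{l,m,1}\le 0.002\pi e^{0.87m\pi}/\bigl(k(k-2\sqrt3m\pi)\bigr)$ and the separation $0.04\pi/(l-4m\pi)^2$ of \lemref{lk<l/2}, the inequality you need is
\[
20\,e^{-0.87m\pi}\,k(k-2\sqrt3 m\pi)>(l-4m\pi)^2 ,
\]
since $0.04/0.002=20$, not $0.02$. Your bound $l-4m\pi\le 2(k-4m\pi+4)$ is correct, so it suffices that $5e^{-0.87m\pi}k(k-2\sqrt3m\pi)>(k-4m\pi+4)^2$. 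This holds for every $m\le-1$:
\begin{itemize}
\item Put $x=-m\pi\ge\pi$ and use $k\ge 50$. The left side is at least $5e^{0.87x}k^2$, and the right side is at most $k^2\bigl(1+(4x+4)/50\bigr)^2$.
\item At $x=\pi$ these are roughly $77k^2$ and $1.8k^2$.
\item $\sqrt5\,e^{0.435x}$ grows much faster than $1+(4x+4)/50$ for $x\ge\pi$, so the inequality persists for all larger $x$.
\end{itemize}
No numerical check of small cases is needed. Once corrected, your argument is slightly cleaner than the paper's. The paper bounds $\delta_{l,m,1}$ and $\delta_{k,m,1}$ separately by $0.01\pi/(l-4m\pi)^2$ and $0.03\pi/(l-4m\pi)^2$. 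It then needs a case split into $l>-8m\pi$ and $l<-8m\pi$, together with $l\ge -4m\pi+100$. You avoid that by bounding $\delta_{l,m,1}$ with the $k$-bound (since $l>k$) and using $l\le 2k-4m\pi+8$, which reduces everything to one inequality in $k$ and $m$.

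The proposed repair is wrong and should be dropped. The quantity $4\lambda_{l,m}+16+8m\pi=4\bigl(\lambda_{l,m}-(-2m\pi-4)\bigr)$ lies between $0.04$ and $16$; it is not a margin times a length scale. When the congruence $l/2-\lambda'_{l,m}\equiv 0\pmod 4$ selects the least integer of $J_{l,m}$, it equals $4(1-\{-2m\pi\})$, which can be as small as about $0.04$. Consequently the gap $b_{l,m}(\alpha^{(1)}_{k,m})-b_{l,m}(\alpha^{(1)}_{l,m})$ is only of order $1/(l-4m\pi)$. After dividing by $b'_{l,m}\asymp l-4m\pi$, the true separation of the zeros is of order $1/(l-4m\pi)^2$, exactly as in \lemref{lk<l/2}. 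A separation of order $1/(l-4m\pi)$ is simply not available, so the repaired argument would not go through.
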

\begin{proof}
%
%
Using \lemref{disp} for $l$ and $m$ satisfying the given conditions we have
 $$
 \delta_{l,m,1}\le \frac{0.001\pi e^{0.87m\pi}}{l(l-2\sqrt3m\pi)}\le \frac{0.01\pi}{(l-4m\pi)^2}.
 $$
In view of \lemref{disp} and \lemref{lk<l/2}, we need to prove that 
  $$
  \delta_{(l-2\lambda_{l,m}-8)/2,m,1}\le \frac{0.004\pi e^{0.87m\pi}}{(l-2\lambda_{l,m}-8)(l-2\lambda_{l,m}-8-4\sqrt3 m\pi)}<\frac{0.03\pi}{(l-4m\pi)^2}.
 $$
 Since $l/2-\lambda_{l,m}-4\ge50$, we have
 $ \lambda_{l,m}=\lambda_{l,m}'\in[-2m\pi-4,-2m\pi]$. Using this we have
\begin{align*}
 (l-2\lambda_{l,m}-8)(l-2\lambda_{l,m}-8-4\sqrt3 m\pi)&\ge (l+4m\pi-8)(l+4m\pi-8-4\sqrt3m\pi)\\&\ge l(l+4m\pi-8).
\end{align*}
So,
 $$
  \delta_{(l-2\lambda_{l,m}-8)/2,m,1}\le \frac{0.004\pi e^{0.87m\pi}}{l(l+4m\pi-8)}.
 $$
Therefore,
 $$
 \frac{0.004\pi e^{0.87m\pi}}{l(l+4m\pi-8)}\le \frac{0.03\pi}{(l-4m\pi)^2}\ \  \text{  if and only if  }\  \frac{e^{0.87m\pi}(l-4m\pi)^2}{l(l+4m\pi-8)}<\frac{30}{4}.
 $$
 First we assume that $l>-8m\pi$, then we have $-4m\pi<l/2$. So,
 $$
 \frac{e^{0.87m\pi}(l-4m\pi)^2}{l(l+4m\pi)}\le \frac{e^{0.87m\pi}(l+\frac{l}{2})^2}{l(l-\frac{l}{2}-8)}=\frac{9le^{0.87m\pi}}{2(l-16)}<\frac{30}{4}.
 $$
Next, we assume that $l<-8m\pi$. Since $l\ge-4m\pi+100$, we have $-4m\pi+100\le l<-8m\pi$. In this case, the following inequality holds
$$
\frac{e^{0.87m\pi}(l-4m\pi)^2}{l(l+4m\pi-8)}\le \frac{e^{0.87m\pi}(2l-100)^2}{92l}<\frac{e^{0.87m\pi}l}{23}<\frac{-8m\pi e^{0.87m\pi}}{23}<\frac{30}{4}.
$$
  Therefore by using \lemref{lk<l/2} we have $\tilde\alpha_{(l-2\lambda_{l,m}-8)/2,m}^{(1)}>\tilde\alpha_{l,m}^{(1)}$.

\end{proof}

\begin{lem}\label{lk>l/2}
Let $l\ge50$ be an even integer such that $r_l=2$. For an integer $m\le-1$ such that $l/2-\lambda_{l,m}\ge 50$, we have
$$
\alpha_{l,m}^{(1)}-\alpha_{(l-2\lambda_{l,m})/2,m}^{(1)}>\frac{0.02\pi}{(l-4m\pi)(l-4m\pi+8)}.
$$
\end{lem}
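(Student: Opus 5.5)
Write $k:=(l-2\lambda_{l,m})/2$, so $r_k=0$ and $r_l=2$. The plan mirrors the proof of \lemref{lk<l/2}, but starts from the identity derived in the proof of \lemref{k<lam}:
$$
b_{k,m}(\alpha_{l,m}^{(1)})-b_{k,m}(\alpha_{k,m}^{(1)})=\frac{\pi}{2}-\frac{l+2\lambda_{l,m}}{4}\left(\alpha_{l,m}^{(1)}-\frac{\pi}{2}\right).
$$
Since $r_l=2$, the upper bound of \lemref{ulb} gives $\alpha_{l,m}^{(1)}-\pi/2<2\pi/D$ with
$$
D:=l-4m\pi\Bigl(1-\tfrac12\bigl(\tfrac{2\pi}{l-3.96m\pi}\bigr)^2\Bigr)=l-4m\pi+2m\pi\Bigl(\tfrac{2\pi}{l-3.96m\pi}\Bigr)^2 .
$$
Hence the difference exceeds $\frac{\pi}{2}\bigl(1-\frac{l+2\lambda_{l,m}}{D}\bigr)=\frac{\pi}{2}\cdot\frac{D-l-2\lambda_{l,m}}{D}$, and the whole task is to bound $D-l-2\lambda_{l,m}$ below by a definite positive constant.

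Split according to the definition of $\lambda_{l,m}$.

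If $\lambda_{l,m}=\lambda'_{l,m}\in J_{l,m}$, then $\lambda_{l,m}$ is an integer at most $-2m\pi+m\pi(2\pi/(l-3.96m\pi))^2$, which is exactly $(D-l)/2$. This bound alone gives no margin, so we use the fractional-part hypothesis. The right endpoint of $J_{l,m}$ equals $-2m\pi-\epsilon$ with $0<\epsilon<0.05$ by \eqref{0.09}. Since $\{-2m\pi\}\ge0.06$, the largest integer not exceeding this endpoint is $\lfloor -2m\pi\rfloor$, and it lies at distance at least $0.06-0.05=0.01$ below the endpoint. Therefore $2\lambda_{l,m}\le D-l-0.02$, i.e. $D-l-2\lambda_{l,m}\ge0.02$.

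If instead $\lambda_{l,m}=l/2-52<\lambda'_{l,m}$, the same inequality holds a fortiori.

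In both cases the $b$-difference is bounded below by $\frac{0.01\pi}{D}$. I expect this margin computation to be the main obstacle. The whole point of the hypothesis $\{-2m\pi\}\ge 0.06$ is to buy the $0.02$ slack, and one must make sure that the correction term in $D$ (at most $0.05$ in magnitude by \eqref{0.09}) is exactly absorbed by the definition of $J_{l,m}$.

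Finally, convert the bound on $b$ into a bound on $\alpha$ via the mean value theorem, since $b_{k,m}$ is increasing. For $\xi\in I^{\circ}$ we have $b'_{k,m}(\xi)\le k/2-2m\pi$ by \eqref{bk'}. Because $k=l/2-\lambda_{l,m}<l$, this gives $b'_{k,m}(\xi)<(l-4m\pi)/2$, hence
$$
\alpha_{l,m}^{(1)}-\alpha_{k,m}^{(1)}>\frac{0.02\pi}{D\,(l-4m\pi)}.
$$
It remains to compare $D$ with $l-4m\pi+8$. We have $D\le l-4m\pi<l-4m\pi+8$, since the correction term $2m\pi(\cdot)^2$ is negative. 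This yields the stated bound $\frac{0.02\pi}{(l-4m\pi)(l-4m\pi+8)}$; the extra $+8$ is mere slack, there to match the form used later in the corollary.
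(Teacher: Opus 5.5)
Your argument is the paper's own: the identity from the proof of \lemref{k<lam}, the upper bound of \lemref{ulb}, the hypothesis $\{-2m\pi\}\ge0.06$ to force $D-l-2\lambda_{l,m}\ge0.02$, and the mean value theorem. Keeping $3.96$ inside $D$ is actually cleaner than the printed proof. That proof switches to $l-2\sqrt3m\pi$ at this point, and for that expression the correction term reaches about $\pi^2/(2\sqrt3\cdot50)\approx0.057$ at $l=50$, so the cited $0.05$ bound does not literally apply there.

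There is one slip, confined to $l=50$. Since $\lambda'_{50,m}\ge3$, the fallback always applies, so $\lambda_{50,m}=-27$ and $k=52>l$; your claim $k<l$ is false there. Moreover $l+2\lambda_{l,m}=-4<0$ in that case. Substituting the upper bound for $\alpha_{l,m}^{(1)}-\pi/2$ into $\frac{\pi}{2}-\frac{l+2\lambda_{l,m}}{4}(\alpha_{l,m}^{(1)}-\frac{\pi}{2})$ then gives an upper bound, not a lower bound.

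Both problems are easily repaired. When $l+2\lambda_{l,m}<0$ the $b$-difference exceeds $\pi/2$ outright. Since $k\le l+2$ in every case, you get $b'_{k,m}(\xi)\le(l-4m\pi+2)/2$, and together with $D<l-4m\pi$ this still yields the stated inequality. So in your route the $+8$ is not mere slack.

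In the paper the $+8$ has a different origin. It bounds $b'_{k,m}(\xi)\le(l-2\lambda_{l,m}-8m\pi)/4<(l-4m\pi+8)/4$ via $\lambda_{l,m}>-2m\pi-4$. That is, it uses $k\approx l/2$ rather than merely $k<l$, and so obtains the constant $0.04\pi$, twice yours. This larger constant is what \corref{ck>l/2} actually consumes, as $0.01\pi+0.03\pi$. However, that sharper estimate is valid only when $\lambda_{l,m}=\lambda'_{l,m}$. In the fallback case $l-2\lambda_{l,m}-8m\pi=104-8m\pi$, and only the constant $0.02\pi$ of the statement survives, which is exactly what your cruder but uniform bound delivers.
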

\begin{proof}
Using \eqref{k>l/2}, we have
\begin{align*}
b_{(l-2\lambda_{l,m})/2}(\alpha_{l,m}^{(1)})-b_{(l-2\lambda_{l,m})/2}(\alpha_{(l-2\lambda_{l,m})/2,m}^{(1)})&>\frac{\pi}{2}-
\frac{l+2\lambda_{l,m}}{4}\left(\frac{2\pi}{l-4m\pi\left(1-\frac{1}{2}\left(\frac{2\pi}{l-2\sqrt3m\pi}\right)^2\right)}\right)\\
&=\frac{\pi}{2}\left(1-\frac{l+2\lambda_{l,m}}{l-4m\pi\left(1-\frac{1}{2}\left(\frac{2\pi}{l-2\sqrt3m\pi}\right)^2\right)}\right).
\end{align*}
As $\lambda_{l,m}\le \lambda_{l,m}'\in J_{l,m}$, we have
\begin{equation}\label{0.005}
\lambda_{l,m}\le -2m\pi\left(1-\frac{1}{2}\left(\frac{2\pi}{l-2\sqrt3m\pi}\right)^2\right).
\end{equation}
For $l\ge 50$ and $m\le-1$, using \eqref{0.09} we have
$$
0<-m\pi\left(\frac{2\pi}{l-2\sqrt3m\pi}\right)^2<0.05
$$
Since $ \{-2m\pi\}\ge 0.06$, we have
$$
\lambda_{l,m}\le -2m\pi\left(1-\frac{1}{2}\left(\frac{2\pi}{l-2\sqrt3m\pi}\right)^2\right)-0.01
$$
Using this, we get that
\begin{align*}
b_{(l-2\lambda_{l,m})/2}(\alpha_{l,m}^{(1)})-b_{(l-2\lambda_{l,m})/2}(\alpha_{(l-2\lambda_{l,m})/2,m}^{(1)})&>\frac{\pi}{2}\left(1-\frac{l-4m\pi\left(1-\frac{1}{2}\left(\frac{2\pi}{l-2\sqrt3m\pi}\right)^2\right)-0.02}{l-4m\pi\left(1-\frac{1}{2}\left(\frac{2\pi}{l-2\sqrt3m\pi}\right)^2\right)}\right)\\
&=\frac{0.01\pi}{l-4m\pi\left(1-\frac{1}{2}\left(\frac{2\pi}{l-2\sqrt3m\pi}\right)^2\right)}.
\end{align*}
Using the mean value theorem, we get that for some $\xi\in (\pi/2,2\pi/3)$,
$$
\alpha_{l,m}^{(1)}-\alpha_{(l-2\lambda_{l,m})/2,m}^{(1)}>\frac{0.01\pi}{\left(l-4m\pi\left(1-\frac{1}{2}\left(\frac{2\pi}{l-2\sqrt3m\pi}\right)^2\right)\right)b_{(l-2\lambda_{l,m})/2}'(\xi)}
$$
Now 
$$
b_{(l-2\lambda_{l,m})/2}'(\xi)\le \frac{l-2\lambda_{l,m}-8m\pi}{4}.
$$
Using this, we get that
$$
\alpha_{l,m}^{(1)}-\alpha_{(l-2\lambda_{l,m})/2,m}^{(1)}>\frac{0.04\pi}{\left(l-4m\pi\left(1-\frac{1}{2}\left(\frac{2\pi}{l-2\sqrt3m\pi}\right)^2\right)\right)(l-2\lambda_{l,m}-8m\pi)}.
$$
As $\lambda_{l,m}>-2m\pi-4$, we get 
\begin{align*}
\alpha_{l,m}^{(1)}-\alpha_{(l-2\lambda_{l,m})/2,m}^{(1)}&>\frac{0.04\pi}{\left(l-4m\pi\left(1-\frac{1}{2}\left(\frac{2\pi}{l-2\sqrt3m\pi}\right)^2\right)\right)(l-4m\pi+8)}\\
&>\frac{0.04\pi}{(l-4m\pi)(l-4m\pi+8)}.
\end{align*}
\end{proof}

\begin{cor}\label{ck>l/2}
Let $l\ge50$ be an even integer such that $r_l=2$. For an integer $m\le-1$ such that $l/2-\lambda_{l,m}\ge 50$ we have
$\tilde\alpha_{l,m}^{(1)}>\tilde\alpha_{(l-2\lambda_{l,m})/2,m}^{(1)}$.
\end{cor}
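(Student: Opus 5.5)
Write $k':=(l-2\lambda_{l,m})/2$. We argue exactly as in \corref{ck<l/2}, using the separation principle from the start of \S\ref{pmain}. By \lemref{lk>l/2} we have $\alpha_{k',m}^{(1)}<\alpha_{l,m}^{(1)}$, with a quantitative gap. So it suffices to show
$$
\delta_{l,m,1}+\delta_{k',m,1}<\frac{0.02\pi}{(l-4m\pi)(l-4m\pi+8)}.
$$
Each displacement is bounded by \lemref{disp}. The only real work is to compare the denominators $l(l-2\sqrt3 m\pi)$ and $k'(k'-2\sqrt3m\pi)$ with $(l-4m\pi)(l-4m\pi+8)$, keeping track of the factor $e^{0.87m\pi}$.

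\textbf{The term for $l$.} Since $l\ge50$, the ratio
$$
\frac{(l-4m\pi)(l-4m\pi+8)}{l(l-2\sqrt3m\pi)}
$$
is bounded by roughly $(1+8/l)\cdot(l-4m\pi)^2/\bigl(l(l-2\sqrt3m\pi)\bigr)$. This grows at most linearly in $-m$ (like $-m/l$), which the factor $e^{0.87m\pi}$ kills. Hence $\delta_{l,m,1}$ is at most a small multiple, say $0.005\pi$, divided by $(l-4m\pi)(l-4m\pi+8)$.

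\textbf{The term for $k'$.} We have $k'\ge 50$, and also $k'\ge l/2+2m\pi$, because $\lambda_{l,m}\le\lambda'_{l,m}\le -2m\pi$. We split into cases as in \corref{ck<l/2}.

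\emph{Case $l\ge -8m\pi$.} Then $k'\ge l/4$, and $k'-2\sqrt3m\pi\ge l/4$, so $k'(k'-2\sqrt3m\pi)\gg l^2$. Also $(l-4m\pi)\le 3l/2$. The quotient $e^{0.87m\pi}(l-4m\pi)(l-4m\pi+8)/\bigl(k'(k'-2\sqrt3m\pi)\bigr)$ is then bounded by an absolute constant times $e^{0.87m\pi}$. Since $e^{0.87m\pi}\le e^{-0.87\pi}\approx0.065$, this is comfortably below $15$, which is the threshold we need.

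\emph{Case $l<-8m\pi$.} Use only $k'\ge50$ and $k'-2\sqrt3m\pi\ge 50-2\sqrt3m\pi$. The numerator is $O(m^2)$, so the quotient is $O(-m)\,e^{0.87m\pi}$. This is again bounded by a small explicit constant, because $-x e^{-0.87\pi x}$ is maximized at small $x$.

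If the definition $\lambda_{l,m}=l/2-52$ is in force, then $k'=52$. This falls under the second estimate directly.

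\textbf{Conclusion and main obstacle.} With $0.001\cdot(\text{constant})$ coefficients, the sum of the two displacements is below $0.02\pi/\bigl((l-4m\pi)(l-4m\pi+8)\bigr)$. This gives $\tilde\alpha_{l,m}^{(1)}>\tilde\alpha_{k',m}^{(1)}$. The main obstacle is the case analysis for $\delta_{k',m,1}$: $k'$ may be much smaller than $l$ when $-m$ is large. We have to check numerically that $e^{0.87m\pi}$ beats the polynomial loss in the regime $l\approx -8m\pi$ and at $m=-1$. I would first try the crude bound $e^{0.87m\pi}(-m)\le e^{-0.87\pi}$ to see whether the constants close.
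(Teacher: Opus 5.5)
Your proposal is correct and follows essentially the paper's route: you bound both $\delta_{l,m,1}$ and $\delta_{(l-2\lambda_{l,m})/2,m,1}$ by \lemref{disp}, compare their sum with the gap from \lemref{lk>l/2}, and split into cases according to the size of $l$ relative to $-m$. The differences are organisational. You split at $l=-8m\pi$ using only $k'\ge\max\{50,\,l/2+2m\pi\}$, which treats the case $\lambda_{l,m}=l/2-52$ uniformly, whereas the paper first separates the two definitions of $\lambda_{l,m}$ and then splits at $l=-5m\pi$; you also budget against the stated $0.02\pi$ gap, whereas the paper uses the $0.04\pi$ that the proof of \lemref{lk>l/2} actually yields.
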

\begin{proof} For any $l\ge 50$ and $m\le-1$, using \lemref{disp}, we have
\begin{equation}\label{4l/5}
\delta_{l,m,1}\le\frac{0.001\pi e^{0.87m\pi}}{l(l-2\sqrt3m\pi)}\le \frac{0.01\pi}{(l-4m\pi)(l-4m\pi+8)}.
\end{equation}
In view of \lemref{lk>l/2}, we need to show that
$$
 \delta_{(l-2\lambda_{l,m})/2,m,1}\le  \frac{0.03\pi}{(l-4m\pi)(l-4m\pi+8)}.
$$
{\bf Case 1.} Let $l/2-\lambda_{l,m}'< 50$. In this case $\lambda_{l,m}=l/2-52$.
     Therefore using \lemref{disp}, we have
 $$
    \delta_{(l-2\lambda_{l,m})/2,m,1}\le \frac{0.004\pi e^{0.87m\pi}}{(l-2\lambda_{l,m})(l-2\lambda_{l,m}-4\sqrt3 m\pi)}=\frac{0.004\pi e^{0.87m\pi}}{104(104-4\sqrt3 m\pi)}.
    $$
       Since $l/2-\lambda_{l,m}'<50$, we have $l<-4m\pi+100$, and so for $m\le-1$, we have 
    $$
  \frac{0.03\pi}{(l-4m\pi+8)(l-4m\pi)}\ge \frac{0.03\pi}{(-8m\pi+108)^2}>
  \frac{0.004\pi e^{0.87m\pi}}{104(104-4\sqrt3 m\pi)}.
    $$
{\bf Case 2.}
In this case we assume $l/2-\lambda_{l,m}'\ge50$. We have
 $ \lambda_{l,m}=\lambda_{l,m}'\in[-2m\pi-4,-2m\pi]$. Using this we get that
\begin{align*}
 (l-2\lambda_{l,m})(l-2\lambda_{l,m}-4\sqrt3 m\pi)&\ge (l+4m\pi)(l+4m\pi-4\sqrt3m\pi)\\&\ge l(l+4m\pi).
\end{align*}
So,
 $$
  \delta_{(l-2\lambda_{l,m})/2,m,1}\le \frac{0.004\pi e^{0.87m\pi}}{l(l+4m\pi)}.
 $$
And hence
 $$
 \frac{0.004\pi e^{0.87m\pi}}{l(l+4m\pi)}\le \frac{0.03\pi}{(l-4m\pi+8)^2} \ \text{ if and only if }\ \frac{e^{0.87m\pi}(l-4m\pi+8)^2}{l(l+4m\pi)}<\frac{30}{4}.
 $$
 First, we assume that $l>-5m\pi$. Then we have $-4m\pi<4l/5$, and then using this we get that
 $$
 \frac{e^{0.87m\pi}(l-4m\pi+8)^2}{l(l+4m\pi)}\le \frac{e^{0.87m\pi}(l+\frac{4l}{5}+8)^2}{l(l-\frac{4l}{5})}={20e^{0.87m\pi}}<\frac{30}{4}.
 $$
Next, we assume that $l<-5m\pi$. Since $l\ge-4m\pi+100$, in this case we have $-4m\pi+100\le l<-5m\pi$. And so
$$
\frac{e^{0.87m\pi}(l-4m\pi+8)^2}{l(l+4m\pi)}\le \frac{e^{0.87m\pi}(2l-92)^2}{100l}<\frac{e^{0.87m\pi}l}{25}<\frac{-5m\pi e^{0.87m\pi}}{25}<\frac{30}{4}.
$$
  Therefore by using \lemref{lk>l/2}, we obtain $\tilde\alpha_{(l-2\lambda_{l,m})/2,m}^{(1)}>\tilde\alpha_{l,m}^{(1)}$.

%
%
%
%
%
%

\end{proof}

\begin{lem}\label{2nd zero}
For even integers $l>k\ge50$, and $m\le -1$, we have 
$$
{\alpha}_{k,m}^{(2)}-{\alpha}_{l,m}^{(1)}\ge\frac{\pi}{k-4m\pi}.
$$
Moreover, $\tilde{\alpha}_{l,m}^{(1)}<\tilde{\alpha}_{k,m}^{(2)}$.
\end{lem}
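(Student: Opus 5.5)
We will compare the two points through the single increasing function $b_{k,m}$, exactly as in \lemref{l(2,0)} and \lemref{lequal}. By \eqref{bkma},
$$
b_{k,m}(\alpha_{k,m}^{(2)})=\frac{k\pi}{4}+\frac{r_k\pi}{4}+\frac{3\pi}{2},\qquad b_{l,m}(\alpha_{l,m}^{(1)})=\frac{l\pi}{4}+\frac{r_l\pi}{4}+\frac{\pi}{2}.
$$
Writing $b_{k,m}(\alpha_{l,m}^{(1)})=b_{l,m}(\alpha_{l,m}^{(1)})-\frac{(l-k)\alpha_{l,m}^{(1)}}{2}$, we obtain
$$
b_{k,m}(\alpha_{k,m}^{(2)})-b_{k,m}(\alpha_{l,m}^{(1)})=\pi+\frac{(r_k-r_l)\pi}{4}+\frac{l-k}{2}\left(\alpha_{l,m}^{(1)}-\frac{\pi}{2}\right).
$$
Since $r_k-r_l\in\{-2,0,2\}$, the constant term is at least $\pi/2$, and the last term is positive because $\alpha_{l,m}^{(1)}>\pi/2$. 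Hence the difference exceeds $\pi/2$. Monotonicity of $b_{k,m}$ (fact (a)) then gives $\alpha_{l,m}^{(1)}<\alpha_{k,m}^{(2)}$.

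Next we apply the mean value theorem on $[\alpha_{l,m}^{(1)},\alpha_{k,m}^{(2)}]\subset I^\circ$, together with the bound $b'_{k,m}(\xi)\le k/2-2m\pi$ from \eqref{bk'}. This yields
$$
\alpha_{k,m}^{(2)}-\alpha_{l,m}^{(1)}>\frac{\pi/2}{k/2-2m\pi}=\frac{\pi}{k-4m\pi}.
$$
The only thing to check here is that $\alpha_{l,m}^{(1)}$ lies in $I^\circ$, which holds by construction.

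For the \emph{moreover} part, we use the displacement principle stated at the start of \S\ref{pmain}. It suffices to show that $\delta_{k,m,2}+\delta_{l,m,1}<\pi/(k-4m\pi)$. \lemref{disp} gives
$$
\delta_{l,m,1}\le\frac{0.001\pi e^{0.87m\pi}}{l(l-2\sqrt3 m\pi)}<\frac{0.001\pi}{50(k-4m\pi)},
$$
using $l>k$ and $l\ge50$. \lemref{disp2} gives
$$
\delta_{k,m,2}\le\frac{0.1\pi}{k-2\sqrt3 m\pi}.
$$
It remains to compare the denominators $k-2\sqrt3m\pi$ and $k-4m\pi$, and this is the step I expect to be the main obstacle. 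The bound in \lemref{disp2} uses $2\sqrt3<4$, so for $m\le-1$ we have $k-2\sqrt3 m\pi<k-4m\pi$, which goes the wrong way. The ratio satisfies
$$
\frac{k-4m\pi}{k-2\sqrt3m\pi}\le\frac{4}{2\sqrt3}\approx1.155,
$$
so $\delta_{k,m,2}\le 0.1155\,\pi/(k-4m\pi)$. This is still comfortably less than $0.99\,\pi/(k-4m\pi)$. Adding the negligible $\delta_{l,m,1}$ term keeps the total below $\pi/(k-4m\pi)$.

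Therefore $|\alpha_{k,m}^{(2)}-\alpha_{l,m}^{(1)}|$ exceeds the total displacement $\delta_{k,m,2}+\delta_{l,m,1}$, and the ordering transfers to the zeros of $H$: $\tilde\alpha_{l,m}^{(1)}<\tilde\alpha_{k,m}^{(2)}$.
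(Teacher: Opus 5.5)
Your proposal is correct and follows the paper's proof: comparing through the increasing function $b_{k,m}$ to get a gap larger than $\pi/2$, applying the mean value theorem with \eqref{bk'}, and then invoking \lemref{disp} and \lemref{disp2}. Your explicit final check, via $(k-4m\pi)/(k-2\sqrt3m\pi)\le 2/\sqrt3$, is a step the paper only asserts. One small slip: for large $-m$, the facts $l>k$ and $l\ge 50$ alone do not give $l(l-2\sqrt3m\pi)>50(k-4m\pi)$ (try $k=50$, $l=52$), so your bound on $\delta_{l,m,1}$ must also use the factor $e^{0.87m\pi}\le e^{-0.87\pi}$ or the same $2/\sqrt3$ comparison; with that, the term is indeed negligible.
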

\begin{proof}
Note that
$$
b_{k,m}({\alpha}_{k,m}^{(2)})\ge\frac{k\pi}{4}+\frac{3\pi}{2}
\ \text{ and } \
b_{l,m}({\alpha}_{l,m}^{(1)})\le\frac{l\pi}{4}+\pi.
$$
Also,
\begin{align*}
    b_{k,m}({\alpha}_{k,m}^{(2)})-b_{k,m}({\alpha}_{l,m}^{(1)})&=b_{k,m}({\alpha}_{k,m}^{(2)})-b_{l,m}({\alpha}_{l,m}^{(1)})+\frac{l-k}{2}{\alpha}_{l,m}^{(1)}\\
&\ge\frac{\pi}{2}+\frac{l-k}{2}\left({\alpha}_{l,m}^{(1)}-\frac{\pi}{2}\right)>\frac{\pi}{2}.
\end{align*}
By applying the mean value theorem and \eqref{bk'}, we get
$$
{\alpha}_{k,m}^{(2)}-{\alpha}_{l,m}^{(1)}\ge\frac{\pi}{k-4m\pi}.
$$
Hence by applying \lemref{disp} and \lemref{disp2}, for even integers $l,k\ge 50$, and $m\le-1$ we get
$
\tilde{\alpha}_{l,m}^{(1)}<\tilde{\alpha}_{k,m}^{(2)}$.
\end{proof}
\subsection{Concluding the proof} Corollaries \ref{c2,0}, \ref{cequal}, \ref{ck<l/2}, \ref{ck>l/2}, and \lemref{2nd zero}, along with \eqref{order}, give us the desired ordering mentioned at the end of \S\ref{struc}. This concludes the proof.

\section*{Acknowledgments}
The research of the author is supported by The Institute of Mathematical Sciences postdoctoral fellowship.
The author would like to thank The Institute of Mathematical Sciences for providing a friendly atmosphere for research.

\end{document}